\theoremstyle{definition}
\newtheorem{definition}{{\bf Definition}}[section]
\theoremstyle{theorem}
\newtheorem{lemma}[definition]{{\bf Lemma}}
\newtheorem{theorem}{{\bf Theorem}}
\newtheorem{cor}[definition]{{\bf Corollary}}
\newtheorem{prop}[definition]{{\bf Proposition}}
\DeclareMathOperator{\Z}{\mathbb{Z}}
\DeclareMathOperator{\N}{\mathbb{N}}
\DeclareMathOperator{\id}{id}
\DeclareMathOperator{\lcm}{lcm}
\DeclareMathOperator{\ord}{ord}
\DeclareMathOperator{\Aut}{Aut}
\DeclareMathOperator{\st}{st}
\DeclareMathOperator{\St}{St}
\DeclareMathOperator{\rst}{rist}
\DeclareMathOperator{\Rst}{Rist}
\newcommand{\cref}[3][]{\hyperref[#3]{#2~\ref*{#3}#1}}
\DeclareMathOperator{\tetr}{tetr}
\DeclareMathOperator{\slog}{slog}
\title{Groups of small period growth}
\author[J. M. Petschick]{Jan Moritz Petschick}
\address{Jan Moritz Petschick: Mathematisches Institut, Heinrich-Heine-Universit\"at, 40225 D\"usseldorf, Germany}
\email{jan.petschick@hhu.de}
\thanks{The research was funded by the Deutsche Forschungsgemeinschaft (DFG, German Research Foundation) — 380258175}
\keywords{Periodic groups, period growth, Burnside problems, groups acting on rooted trees, residually finite groups}
\subjclass[2020]{Primary 20F69; Secondary 20E08, 20E26}
\date{\today}
\begin{document}

\begin{abstract}
	We construct finitely generated groups of small period growth, i.e.\ groups where the maximum order of an element of word length $n$ grows very slowly in $n$. This answers a question of Bradford related to the lawlessness growth of groups and is connected to an approximative version of the restricted Burnside problem.
\end{abstract}

\maketitle

\section{Introduction} 
\label{sec:introduction}

In this paper we provide an affirmative answer to the following question posed by Henry~Bradford at the ``New Trends around Profinite Groups'' conference in Levico~Terme, 2021.
\begin{description}
	\item[Q1] Is there a lawless finitely generated $p$-group of sublinear period growth?
\end{description}
Let $G$ be a group generated by a finite set $S$. For any $n \in \N$ write $B_G^S(n)$ for the set of elements in $G$ of word length at most $n$ (with respect to $S$). The \emph{period growth function $\pi_G^S: \N \to \N \cup \{\infty\}$ of $G$ with respect to $S$}, first considered by Grigorchuk \cite{Gri83}, is defined by
\[
	\pi_G^S(n) = \max \{\ord(g) \mid g \in B_G^S(n) \}.
\]
Grigorchuk proved that the {growth type} of $\pi_G^S$ is independent of the choice of $S$. Consequently, \textbf{Q1} is well-posed and we drop the superscript $S$ in statements regarding the growth type of the period growth function of a group.

Bradford’s question was motivated by an application to \emph{lawlessness growth}, cf.\ \cite{Bra}*{Example 2.7 \& Question 10.2}. The lawlessness growth of a lawless group measures the minimal word length of witnesses to the non-triviality of the verbal subgroup $w(G)$ for group words $w$ of increasing length.
Since elements of order $m$ do not satisfy any power words of length smaller than $m$, there is a connexion to the period growth of $G$. In fact, an example of a lawless $p$-group, $p$ being some prime, with the properties required by \textbf{Q1} has super-linear lawlessness growth. For a detailed study on lawlessness growth, we refer to \cite{Bra}.

Clearly a group with the properties demanded in \textbf{Q1} is infinite, since it is lawless, and periodic, since otherwise there exists some $n_0 \in \N$ such that $\pi_G^S(n) = \infty$ for all $n \geq n_0$. Little is known regarding the period growth of finitely generated infinite periodic groups. Grigorchuk proved that the (first) Grigorchuk group $\mathcal G$ fulfills $\pi_{\mathcal G} \precsim n^9$, where, given two non-decreasing functions $f, g: \N \to \mathbb R_{>0}$, we write $f \precsim g$ if $\limsup\limits_{n \to \infty}{f(n)}/{g(n)} < \infty$. This bound was improved by Bartholdi and \v{S}uni\'{k} \cite{BS01} to $n^{3/2}$, also extending the result to certain generalisations of $\mathcal G$. In \cite{Bra}*{Remark 5.7} Bradford constructs a Golod--Shafarevich $p$-group of at most linear period growth. We remark that the standard proof that the Gupta--Sidki $3$-group $\Gamma_3$ is periodic yields $\pi_{\Gamma_3} \precsim n^{1/\log_3(4/3)}$.

To state our main result, we need to define some functions growing very slowly. The \emph{tetration function $\tetr_{k}: \N \to \N$ with base $k$} is defined recursively by $\tetr_{k}(0) = 1$ and $\tetr_{k}(n+1) = k^{\tetr_{k}(n)}$ for $n \in \N$. We define a left-inverse non-decreasing function by $\slog_{k}(n) = \max \{ l \in \N \mid \tetr_{k}(l) \leq n \}$.

Now we may state our main result.
\begin{theorem}\label{thm:main}
	There exists a $4$-generated infinite residually finite periodic $2$-group $G$ such that
	\[
		\pi_G \precsim \exp_8 \circ \slog_{2}.
	\]
\end{theorem}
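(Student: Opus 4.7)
The plan is to construct $G$ as a group of automorphisms of an appropriately chosen spherically homogeneous rooted tree $T$, built from a self-similar recursion specifically designed so that appropriate powers of an element contract into sections whose word length is \emph{logarithmically} shorter than that of the original element. First I would fix $T$ (possibly with branching degrees $d_k$ at level $k$ growing with $k$) and four generators $a_0, a_1, a_2, a_3$ defined inductively via wreath-recursion style relations; the precise tuning of these data is what pins down the bound $\exp_8 \circ \slog_2$.

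With $G$ in hand, the structural requirements of the theorem are handled by standard machinery. $G$ is $4$-generated by construction, residually finite because it acts faithfully on $T$ and the sequence of level stabilisers provides a chain of finite-index normal subgroups with trivial intersection, and infinite by a branching or contracting argument exhibiting nontrivial elements in arbitrarily deep level stabilisers. To see that $G$ is a $2$-group I would argue by induction on word length: the self-similar structure expresses a suitable power of an element $g$ of word length $n$ in terms of its sections, which by design have smaller length, and the induction terminates with the trivial element.

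The heart of the proof is a length-reduction lemma along the following lines: for every $g \in G$ of word length $n$ large enough, there exist a level $\ell$ and an exponent $e \leq 8$ such that $g^e$ lies in the $\ell$-th level stabiliser and every section of $g^e$ at level $\ell$ has word length at most $C \log_2 n$ for some fixed $C$. Iterating this estimate produces a tree of sections whose lengths shrink like $n \mapsto \log_2 n \mapsto \log_2 \log_2 n \mapsto \cdots$, so that after $\slog_2(n) + O(1)$ iterations the lengths are bounded by a constant and the corresponding sections have uniformly bounded order. Since each iteration costs a factor of at most $8$ in the order, we obtain $\ord(g) \leq 8^{\slog_2(n) + O(1)}$, which is exactly the asserted bound $\pi_G \precsim \exp_8 \circ \slog_2$.

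The main obstacle will be actually achieving \emph{logarithmic}, as opposed to merely constant-factor, length reduction. In the Grigorchuk and Gupta--Sidki groups, sections of an element of length $n$ have length $\eta n$ for a constant $\eta < 1$, which yields only polynomial period growth and falls far short of the target. Forcing logarithmic shrinkage presumably requires the generators to encode deeply nested commutator structure and the branching of $T$ to grow fast enough with the level that a fixed amount of ``growth per level'' in the generator definitions translates into an exponential compression of sections under a power of order at most $8$. Calibrating this construction so that $G$ is simultaneously nontrivial, a $2$-group, and satisfies the length-reduction lemma is the delicate combinatorial task I expect to occupy the bulk of the argument.
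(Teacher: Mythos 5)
Your high-level outline is in the right direction — iterate a length-reduction lemma $\slog_2(n)$ times, pay a factor of $8$ in the order per iteration, conclude $\pi_G \precsim \exp_8\circ\slog_2$ — but the mechanism you propose for the reduction is wrong, and it masks the one genuine idea the construction needs.

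You insist that each iteration should shrink length \emph{logarithmically}, i.e.\ from $n$ to roughly $C\log_2 n$. That is not what happens, and it is not clear how any wreath-recursive construction could achieve a reduction whose strength depends on the length of the current element rather than on the level of the tree. What the paper does instead is entirely level-driven: the group is a spinal-type group (in the spirit of $K_r$ from \cref{Section}{sec:proof_of_the_theorem}, no ``deeply nested commutator structure'' is involved) acting on a spherically homogeneous tree whose valency grows with depth, and \cref{Lemma}{lem:reduction full} gives a \emph{constant}-factor reduction
\[
	\|g^8|_u\|_{\mathrm S_{k+3}} \le \left\lceil \frac{4\,\|g\|_{\mathrm S_k}}{f(k/3)} \right\rceil + 1, \qquad u \in \mathcal L_{T_k}(3),
\]
where the constant $f(k/3)$ is the rank of the elementary abelian rooted group at that depth and so depends only on the level, not on $g$. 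The whole point is to let $f$ grow as fast as possible: one can fit at most $2^{f(k)}-1$ nontrivial sections of the spinal generator at a vertex of level $k$, so the best one can do is $f(k+1)=2^{f(k)}-1$. By \cref{Lemma}{lem:growth of f} this forces $f(k)\ge\tetr_2(k)$, and after $m$ applications of the reduction the length of $g$ has been divided by roughly $\prod_{j<m} f(j)/4^m$, which exceeds $n$ once $m\gtrsim\slog_2(n)$. That is where the $\slog_2$ comes from; no logarithm of the current length ever appears.

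A second problem with your phrasing: you allow the level $\ell$ to depend on $g$ while keeping the exponent $e\le 8$. Since $G/\St_G(\ell)$ has exponent roughly $2^\ell$, an element need not enter $\St_G(\ell)$ until raised to the $2^\ell$-th power, so $e$ cannot stay bounded while $\ell$ grows. The paper avoids this by moving exactly three layers at a time (hence $g^{2^3}=g^8$), iterating, and only changing the valency on one of the three layers (the $k\equiv_3 2$ case in the definition of $d_k$). Getting the bookkeeping between the fixed power $8$, the three-layer block, and the valency jump is precisely the content of \cref{Lemma}{lem:reduction step}, \cref{Lemma}{lem:change of valency} and \cref{Lemma}{lem:reduction full}, and none of it resembles your proposed ``logarithmic shrinkage'' lemma. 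Without these ingredients — the spinal generator whose sections span the next, much larger rooted group, the tower recursion $f(k+1)=2^{f(k)}-1$ for the valencies, and the three-layer periodic structure — your plan does not produce a proof.
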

In particular, the function $\pi_G$ grows slower than any iterated logarithm. \cref{Theorem}{thm:main} gives an affirmative answer to \textbf{Q1}.

The group we construct to prove \cref{Theorem}{thm:main} is realised as a group of automorphisms of a spherically homogeneous locally finite rooted tree, whose valency is unbounded. In the theory of automorphisms of rooted trees it is often interesting to obtain examples acting on regular trees, i.e.\ locally finite trees where all vertices (except the root vertex) have the same valency. On our way to prove \cref{Theorem}{thm:main}, we obtain a family of groups of slow (albeit far faster than the growth described in \cref{Theorem}{thm:main}) period growth that act on regular rooted trees without additional work.

\begin{theorem}\label{thm:secondary}
	Let $\epsilon > 0$. There exists a finitely generated infinite residually finite periodic $2$-group $G_\epsilon$ acting on a regular rooted tree (depending on $\epsilon$) such that
	\[
		\pi_{G_\epsilon} \precsim n^\epsilon.
	\]
\end{theorem}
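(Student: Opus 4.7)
The plan is to specialise the construction that yields \cref{Theorem}{thm:main} to a spherically homogeneous tree with \emph{constant} valency sequence. One expects the construction behind \cref{Theorem}{thm:main} to produce a self-similar $2$-group $G$ acting on a rooted tree $T$ with valency sequence $(m_n)_{n \in \N}$, where at each level the order of an element is controlled by a factor depending on $m_n$, and where the drastic choice of fast-growing $(m_n)$ is precisely what powers the $\exp_8 \circ \slog_2$ bound. For \cref{Theorem}{thm:secondary}, the idea is to instead fix $m_n \equiv m$ for a parameter $m = m(\epsilon)$, so that the underlying tree becomes $m$-regular.

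First, I would revisit the proof of \cref{Theorem}{thm:main} and isolate the bound it produces as a function of the valency sequence. Schematically I expect an estimate of the form
\[
    \pi_G(n) \precsim 2^{c\, h(n)},
\]
where $h(n)$ is the largest level index $k$ for which $m_0 m_1 \cdots m_{k-1} \lesssim n$, reflecting the fact that a word of length $n$ can only nontrivially affect the first $h(n)$ levels of the tree. With constant valency $m$, one has $h(n) \sim \log_m n$, so
\[
    \pi_G(n) \precsim 2^{c \log_m n} = n^{c/\log_2 m},
\]
yielding polynomial period growth with exponent $c/\log_2 m$. Choosing $m = m(\epsilon)$ (a power of $2$, for convenience) large enough so that $c/\log_2 m < \epsilon$ and setting $G_\epsilon$ to be the resulting self-similar group on the $m$-regular rooted tree would give the bound in the statement.

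It remains to check the qualitative properties. Finite generation and residual finiteness should follow verbatim from the original construction: the generating set is prescribed level-independently, and finitely generated subgroups of $\Aut(T)$ are automatically residually finite via their action on the level stabilisers. The group is infinite and periodic by the same self-similar induction that establishes these properties in \cref{Theorem}{thm:main}, since these arguments proceed by contraction/branching and are insensitive to whether the valencies are constant or varying, provided the recursive definitions remain well-formed when $m_n$ is held fixed.

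The main obstacle I anticipate is verifying that the periodicity argument for \cref{Theorem}{thm:main} goes through uniformly when the valencies are held constant. In many rooted-tree constructions the recursive order estimate depends on ratios between valencies at successive levels, and forcing them equal can either break the induction or force $c$ to depend on $m$ in a way that defeats the gain from enlarging $\log_2 m$. I would therefore pay careful attention to the step estimate in that induction, treating $m$ as a tunable parameter in the wreath-product recursion so that the induction hypothesis remains valid for arbitrarily small $\epsilon$. Once this uniformity is established, no further work is required to obtain the action on a regular rooted tree, since that is precisely the effect of taking $m_n$ constant.
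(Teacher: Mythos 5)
Your high-level plan, constructing a self-similar $2$-group on a regular rooted tree whose branching number is a tunable parameter, then choosing the parameter large to drive the period-growth exponent below $\epsilon$, is indeed what the paper does. The groups in question are the $K_r = \langle A_r \cup \{b_r\}\rangle$ with $A_r = C_2^r$ acting on the $2^r$-regular tree. One presentational remark: the paper goes in the \emph{opposite} direction from what you propose, proving Theorem~\ref{thm:secondary} first and then building the variable-valency machine for Theorem~\ref{thm:main} on top of it, so your anticipated obstacle, that freezing the valency might break the induction, does not arise: the constant-valency case is the \emph{easier} one, and the only delicate point in the later construction is how to increase the valency while preserving contraction.

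There is, however, a real misconception in the step where you extract the quantitative bound. You claim $\pi_G(n) \precsim 2^{c\,h(n)}$ with $h(n)$ ``the largest level index $k$ for which $m_0 m_1 \cdots m_{k-1} \lesssim n$, reflecting the fact that a word of length $n$ can only nontrivially affect the first $h(n)$ levels of the tree.'' This is not the mechanism. The groups here are weakly branch: a single generator such as $b_r$ already acts nontrivially on \emph{every} level, and elements of bounded word length have portraits of unbounded depth. What actually controls the order is a \emph{contraction of section lengths}: the key estimate (\cref{Lemma}{lem:reduction}) is that for $u$ on the second layer, $\|g|_u\|_{\mathbb S_r} \le \lceil \|g\|_{\mathbb S_r}/r\rceil$. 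Iterating, after passing $2k$ layers the section length drops to roughly $n(4/r)^k$, while the order picks up a factor of $4^k$; the recursion terminates once the section length reaches $1$. This yields $\pi_{K_r}(n) \precsim n^{1/(\log_4 r - 1)}$.

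The quantitative consequence of the misconception is that your predicted exponent is off by an exponential in the right variable. If $m$ denotes the branching number, the contraction factor per pair of layers is $r = \log_2 m$ (the rank of $A_r$), not $m$ itself: the short sections of $b_r$ live at the $r$ vertices $\overline{e_i}$, and that is what produces a factor-$r$ saving. So $h(n) \sim \log_r n \sim (\log n)/(\log\log_2 m)$, not $\log_m n$, and the exponent behaves like $c/\log_2\log_2 m$ rather than $c/\log_2 m$. Fortunately this still tends to $0$ as $m \to \infty$, so the qualitative conclusion $\pi_{G_\epsilon} \precsim n^\epsilon$ is unaffected; but the heuristic you used to predict the bound would lead you astray if you tried to optimize it, and it is not the argument that actually proves the contraction. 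To make the proof go through you would still need to \emph{establish} the section-length estimate, which is the heart of the matter and which your sketch defers entirely to ``the construction behind Theorem~\ref{thm:main}''; in the paper it is an explicit syllable-counting argument (\cref{Lemma}{lem:main technical}) using the fact that the translated basis vectors $\overline{e_i}$ have $E_r$-length $r-1$.
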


We stress the fact that the groups we construct are residually finite. This is important in the context of the following approximative variant of the restricted Burnside problem. The restricted Burnside problem may be formulated as: Are residually finite groups with bounded period growth function finite? Thus, considering groups with slow but not bounded period growth as the next best thing to groups of finite exponent, we ask:
\begin{description}
	\item[Q2] Among all $m$-generated residually finite infinite $p$-groups $G$, what are the minimal growth types of $\pi_G$?
\end{description}
By Zel\cprime manovs \cites{Zel90, Zel91} solution to the restricted Burnside problem, the finite residual $\mathrm{res~B}(m,n)$ of the free Burnside group of rank $m$ and exponent $n$ is a finite group for all values of $m$ and $n$. Define
\[
	\operatorname{zel}_m(n) = \max \{ k \in \N \mid |\mathrm{res~B}(m,k)| \leq n\}.
\]
Since \textbf{Q2} excludes finite groups, this function yields a lower bound for the period growth function of any $m$-generated residually finite infinite $p$-group. The best known lower bound for $\operatorname{zel}_m(n)$ is due to Groves and Vaughan-Lee \cite{GVL03}, who prove that
\[
	\operatorname{zel}_m(n^{(4^n)}) \geq \slog_m(n). 
\]
\cref{Theorem}{thm:main} provides a group whose period growth comes close to the best known upper bound for $\operatorname{zel}_m$,
\[
	\operatorname{zel}_m(2^{2^{\iddots^{2^m}}}) \leq 2^k,
\]
with $k$ appearences of the number $2$ in the tower on the left side, which is due to Newman, whose argument is given in \cite{VLZ99}.

\subsection*{Organisation}
After some preliminary definitions, we first prove \cref{Theorem}{thm:secondary}, and then use the groups constructed for this purpose as a model for the more involved construction of the group we use to prove \cref{Theorem}{thm:main}. We then establish that all the groups constructed are lawless and thus constitute examples of groups with fast lawlessness growth. We end with some open questions related to the subject.

\subsection*{Acknowledgements}
This is part of the author's Ph.D. thesis, written under the supervision of Benjamin Klopsch at the Heinrich-Heine-Universität Düsseldorf. The author thanks Henry Bradford for his inspiring questions and for granting the author access to his unpublished work.


\section{Groups of automorphisms of rooted trees} 
\label{sec:preliminaries}

Let $G$ be a group generated by a set $S$. We write $||\cdot||_S: G \to \N$ for the word length function of $G$ with respect to $S$ and $B_G^S(n)$ for the set of elements of $G$ of length $n$ with respect to $S$. For two integers $l, u \in \Z$, we denote by $[l, u]$ and $[l, u)$ the set of integer numbers within the corresponding intervals.

Let $(X_n)_{n \in \N_+}$ be a sequence of finite non-empty sets. The \emph{(spherically homogeneous) rooted tree of type} $(X_n)_{n \in \N_+}$ is the tree $T$ with finite strings $x_1 \dots x_k$, $x_i \in X_i$ for $i \in [1,k]$, as vertices and edges between strings that only differ by one letter. The empty string is called the root of the tree. Every vertex of distance $k$ for some fixed $k \in \N$ from the root is a string of length $k$, which has valency $|X_{k+1}|+1$. The set $\mathcal{L}_T(k)$ of vertices of vertices of distance $k$ to the root is called the $k$\textsuperscript{th} layer of the tree. We identify the first layer with the set $X_1$. Every vertex $u \in \mathcal{L}_T(k)$ is the root of a rooted subtree $T_u$ of type $(X_n)_{n \geq k}$. We may compose strings in the following way: if $v \in \mathcal{L}_T(k)$ and $u \in T_v$, then the concatenation $vu$ is a vertex of $T$.

If the sequence $(X_n)_{n \in \N_+}$ is constant, we call the corresponding tree \emph{regular}. In this case, all subtrees $T_u$ for $u \in T$ are isomorphic.

A \emph{(tree) automorphism} of $T$ is a (graph) automorphism of $T$ fixing the root. Such a map must also leave the layers of $T$ invariant. Let $v \in T$ and $u \in T_v$ be two vertices, and $a \in \Aut(T)$ an automorphism of $T$. Then the equation
\[
	(vu).a = (v.a) (u.(a|_v))
\]
defines a unique automorphism $a|_v$ of $T_v$ called the \emph{section of $a$ at $v$}.

Any automorphism $a$ can be decomposed into its sections prescribing the action at the subtrees of the first layer, and $a|^\epsilon$, the action of $a$ on the first layer $\mathcal{L}_T(1) = X_1$. We adopt the convention that an $X_1$-indexed family $(x: a_x)_{x \in X_1}$ of automorphisms $a_x \in \Aut(T_x)$ is identified with the automorphism having section $a_x$ at $x$ which stabilises the first layer. Hence for any $a \in \Aut(T)$ we write
\[
	a = (x: a|_x)_{x \in X_1} a|^\epsilon.
\]
We record some important equalities for sections. Let $a \in \Aut(T), u \in T$ and $v \in T_u$. Then
\[\begin{array}{ccc}
	(a|_u)|_v = a|_{uv}, &
	(ab)|_u = a|_ub|_{u.a}, &
	a^{-1}|_u = (a|_{u.a^{-1}})^{-1}.
\end{array}\]
We call an automorphism \emph{rooted} if all its first layer sections are trivial, i.e.\ if it permutes the set of subtrees $\{T_x \mid x \in X_1 \}$. The subgroup of rooted automorphisms is isomorphic to $\operatorname{Sym}(X_1)$.

Let $G \leq \Aut(T)$ be a group of automorphisms. The (pointwise) stabiliser of the $k$\textsuperscript{th} layer of $T$ in $G$ is denoted $\St_G(k)$ and called the \emph{$k$\textsuperscript{th} layer stabiliser}. All layer stabilisers are normal subgroups of finite index in $G$. Their intersection is trivial, hence the group $G$ is residually finite. The group $G$ is called \emph{spherically transitive} if it acts transitively on every layer $\mathcal{L}_T(k)$. 

The \emph{$k$\textsuperscript{th} rigid layer stabiliser} $\Rst_G(k)$ of a spherically transitive group $G$ for some $k \in \N$ is the product of all (equivalently, the normal closure of a) \emph{rigid vertex stabiliser} $\rst_G(u) = \{g \in G \mid g|_v = \id \text{ for } v \in T\setminus T_u\}$, where $u \in \mathcal{L}_T(k)$. A spherically transitive group $G$ is \emph{weakly branch} if $\Rst_G(k)$ is non-trivial for all $k \in \N$. Every weakly branch group is lawless, cf.\ \cite{Abe05}.

If $T$ is regular, a group $G \leq \Aut(T)$ is called \emph{self-similar} if for all $u \in T$ the image of the section map $G|_u$ is contained in $G$. It is called \emph{fractal} if $\st_G(x)|_x = G$ for all $x \in \mathcal{L}_T(1)$. The group $G$ is called \emph{weakly regular branch} if it contains a non-trivial subgroup $H \leq G$ such that $\rst_H(x) \geq H$ for all $x \in \mathcal{L}_T(1)$. Every weakly regular branch group is weakly branch.

Since we aim to provide examples of periodic groups, we need the following criterion for periodicity, which is adopted from the methods developed by Grigorchuk, Gupta and Sidki, cf.\ \cites{Gri83,GS83}. Since our criterion is adapted to a more general situation, we give a short proof.

\begin{prop}\label{prop:periodicity}
	Let $G \leq \Aut(T)$ be a group, let $\pi$ be a set of primes and let $n \in \N$ be a positive integer, such that $G|_u/\St_{G|_u}(n)$ is a $\pi$-group for $u \in T$. For every vertex $u \in T$, let $||\cdot ||_u: G|_u \to \N$ be a length function such that $||g||_u \leq 1$ implies that $g$ is a $\pi$-element.
	
	If for all vertices $u, v \in T$ such that $v = uw$ for some string $w$ of length $n$, and all $g \in G|_u$ we have
	\begin{equation*}\label{eq:length reduction}
		||g|_w||_v < ||g||_u/\exp(G|_u/{\St_{G|_u}(n)}),\tag{$\star$}
	\end{equation*}
	then $G$ is a $\pi$-group.
\end{prop}

\begin{proof}
	Let $g \in G|_u$ for some $u \in \mathcal{L}_T(k)$ and $k \in \N$. We prove that the order of $g$ is finite and divisible by primes in $\pi$ only. The statement then is obtained by considering $u = \epsilon$. We use induction on $\ell = ||g||_{u}$. If $\ell \leq 1$, the element is a $\pi$-element by assumption. If $\ell > 1$, write $q = \exp(G|_u/\St_{G|_u}(n))$. By assumption, $q$ is only divisible by primes in $\pi$. Now $g^q$ stabilises the $n$\textsuperscript{th} layer, hence $g^q = (x: g^q|_x)_{x \in \mathcal{L}_{T_u}(n)}$ and $\ord(g) | q \cdot \lcm\{ \ord(g^q|_x) \mid x \in \mathcal{L}_{T_u}(n) \}$. Using (\ref{eq:length reduction}) we obtain
	\[
		||g^q|_x||_{ux} < ||g^q||_u/q \leq ||g||_u = \ell
	\]
	for all $x \in \mathcal{L}_{T_u}(n)$. Thus by induction $\ord(g^q|_x)$ finite and divisible by primes in $\pi$ only, and consequentely the same holds for $g$.
\end{proof}


\section{Layerwise length reduction and the proof of \cref{Theorem}{thm:secondary}} 
\label{sec:proof_of_the_theorem}

We construct a family of groups $K_r$, indexed by the positive integers, acting on regular rooted trees $T^{(r)}$ whose type depends on $r$. Fix a positive integer $r$, and write $A_r = C_2^r$ for the elementary abelian $2$-group of rank $r$. Also fix a (minimal) generating set $E_r = \{e_i \mid i \in [0, r)\}$. Let $T^{(r)}$ be the regular rooted tree of type $(A_r)_{n \in \N_+}$. We now construct $K_r$ as a group of automorphisms of $T^{(r)}$, using a construction much in spirit of the Gupta--Sidki $p$-groups or the second Grigorchuk group. In fact, $K_r$ is a (constant) spinal group in the terminology of \cites{BGS03, Pet21}.

View the group $A_r$ as rooted automorphisms of $T^{(r)}$ by embedding $A_r$ into $\operatorname{Sym}(A_r)$ via its right multiplication action. Notice that we may see an element $a \in A_r$ both a as vertex of $T^{(r)}$ and an automorphism acting on $T^{(r)}$. We fix a translation map of $A_r$, given by $a \mapsto \overline{a} := \prod_{i = 0}^{r-1}e_i a$. Therefore $||\overline{e_i}||_{E_r} = r - 1$ for all $i \in [0,r)$.

Define $b_r \in \Aut(T^{(r)})$ by
\[
	b_r = (1_{A_r}: b_r;\,\, \overline{e_i}: e_i \text{ for } i \in [0, r);\,\, \ast: \id),
\]
where $\ast$ stands for every element of $A_r$ not refered to elsewhere in the tuple. \cref{Figure}{fig:generator} depicts the case $r = 3$ as an example. Notice that $b_r \in \St(1)$ is an involution. We define
\[
	K_r = \langle A_r \cup \{b_r\} \rangle.
\]
This is a group generated by $r+1$ involutions. For $r = 1$ we obtain a group isomorphic to the infinite dihedral group, and also $P_2$ contains elements of infinite order, but for $r > 2$ all groups $K_r$ are periodic by \cite[Theorem A]{Pet21}. We do not need to rely on this result, since the bounds establishing slow period growth also show that $K_r$ is periodic for $r > 4$. Since we are mostly interested in $K_r$ for big $r$, this suffices for our purposes.

\begin{figure}
	\begin{tikzpicture}
		\draw (0,0) -- (2,0) -- (2,2) -- (0,2) -- (0,0);
		\draw (0,2) -- (1,3) -- (3,3) -- (3,1) -- (2,0);
		\draw (2,2) -- (3,3);
		\draw[dotted] (0,0) -- (1,1) -- (1,3);
		\draw[dotted] (1,1) -- (3,1);
		
		\draw[red, dashed] (-.2,0) -- (-1.8,0);
		\node at (0,-.3) {$1_{A_3}$};
		\node at (-2.2,-.7) {$\curvearrowleft b_3$};
		
		\begin{scope}[shift={(-2,0)}, shift={(-0.6,-0.45)}, scale=0.3]
			\draw (0,0) -- (2,0) -- (2,2) -- (0,2) -- (0,0);
			\draw (0,2) -- (1,3) -- (3,3) -- (3,1) -- (2,0);
			\draw (2,2) -- (3,3);
			\draw[dotted] (0,0) -- (1,1) -- (1,3);
			\draw[dotted] (1,1) -- (3,1);
		\end{scope}
		
		\draw[red, dashed] (-.2,2) -- (-1.8,2);
		\node at (-.1,2.3) {$e_1$};
		
		\begin{scope}[shift={(-2,2)}, shift={(-0.6,-0.45)}, scale=0.3]
			\draw (0,0) -- (2,0) -- (2,2) -- (0,2) -- (0,0);
			\draw (0,2) -- (1,3) -- (3,3) -- (3,1) -- (2,0);
			\draw (2,2) -- (3,3);
			\draw[dotted] (0,0) -- (1,1) -- (1,3);
			\draw[dotted] (1,1) -- (3,1);
		\end{scope}
		
		\draw[red, dashed] (.8,1) -- (-.8,1);
		\node at (1.1,.7) {$e_2$};
		
		\begin{scope}[shift={(-1,1)}, shift={(-0.6,-0.45)}, scale=0.3]
			\draw (0,0) -- (2,0) -- (2,2) -- (0,2) -- (0,0);
			\draw (0,2) -- (1,3) -- (3,3) -- (3,1) -- (2,0);
			\draw (2,2) -- (3,3);
			\draw[dotted] (0,0) -- (1,1) -- (1,3);
			\draw[dotted] (1,1) -- (3,1);
		\end{scope}
		
		\draw[red, dashed] (.8,3) -- (-.8,3);
		\node at (1,3.3) {$\overline{e_0}$};
		
		\begin{scope}[shift={(-1,3)}, shift={(-0.6,-0.45)}, scale=0.3]
			\filldraw[gray!30] (1,0) -- (1,2) -- (2,3) -- (2,1);

			\draw (0,0) -- (2,0) -- (2,2) -- (0,2) -- (0,0);
			\draw (0,2) -- (1,3) -- (3,3) -- (3,1) -- (2,0);
			\draw (2,2) -- (3,3);
			\draw[dotted] (0,0) -- (1,1) -- (1,3);
			\draw[dotted] (1,1) -- (3,1);
		\end{scope}
		
		\draw[red, dashed] (2.2,0) -- (3.8,0);
		\node at (2,-.3) {$e_0$};
		
		\begin{scope}[shift={(4,0)}, shift={(-0.2,-0.45)}, scale=0.3]
			\draw (0,0) -- (2,0) -- (2,2) -- (0,2) -- (0,0);
			\draw (0,2) -- (1,3) -- (3,3) -- (3,1) -- (2,0);
			\draw (2,2) -- (3,3);
			\draw[dotted] (0,0) -- (1,1) -- (1,3);
			\draw[dotted] (1,1) -- (3,1);
		\end{scope}
		
		\draw[red, dashed] (2.2,2) -- (3.8,2);
		\node at (1.9,2.3) {$\overline{e_2}$};
		
		\begin{scope}[shift={(4,2)}, shift={(-0.2,-0.45)}, scale=0.3]
			\filldraw[gray!30] (.5,.5) -- (.5,2.5) -- (2.5,2.5) -- (2.5,.5);
			
			\draw (0,0) -- (2,0) -- (2,2) -- (0,2) -- (0,0);
			\draw (0,2) -- (1,3) -- (3,3) -- (3,1) -- (2,0);
			\draw (2,2) -- (3,3);
			\draw[dotted] (0,0) -- (1,1) -- (1,3);
			\draw[dotted] (1,1) -- (3,1);
		\end{scope}
		
		\draw[red, dashed] (3.2,1) -- (4.8,1);
		\node at (3.1,.7) {$\overline{e_1}$};
		
		\begin{scope}[shift={(5,1)}, shift={(-.2,-0.45)}, scale=0.3]
			\filldraw[gray!30] (0,1) -- (1,2) -- (3,2) -- (2,1);
		
			\draw (0,0) -- (2,0) -- (2,2) -- (0,2) -- (0,0);
			\draw (0,2) -- (1,3) -- (3,3) -- (3,1) -- (2,0);
			\draw (2,2) -- (3,3);
			\draw[dotted] (0,0) -- (1,1) -- (1,3);
			\draw[dotted] (1,1) -- (3,1);
		\end{scope}
		
		\draw[red, dashed] (3.2,3) -- (4.8,3);
		\node at (3,3.3) {${\overline{1_{A_3}}}$};
		
		\begin{scope}[shift={(5,3)}, shift={(-.2,-0.45)}, scale=0.3]
			\draw (0,0) -- (2,0) -- (2,2) -- (0,2) -- (0,0);
			\draw (0,2) -- (1,3) -- (3,3) -- (3,1) -- (2,0);
			\draw (2,2) -- (3,3);
			\draw[dotted] (0,0) -- (1,1) -- (1,3);
			\draw[dotted] (1,1) -- (3,1);
		\end{scope}
	\end{tikzpicture}
	\caption{The action of the generator $b_3$ of $P_3$ on the first two layers of $T^{(3)}$.}\label{fig:generator}
\end{figure}

We fix two generating sets for $K_r$,
\[
	\mathbb E_r = E_r \cup \{ b_r \} \quad\text{ and }\quad
	\mathbb S_r = A_r \cup b_r^{A_r},
\]
and establish some basic properties of the groups $K_r$.

\begin{lemma}\label{lem:infinite}
	Let $r \in \N_+$ be a positive integer. The group $K_r$ is self-similar, fractal and spherically transitive. In particular, it is infinite.
\end{lemma}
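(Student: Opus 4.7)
The plan is to verify the three properties in sequence; infiniteness then follows automatically, since any spherically transitive subgroup of $\Aut(T^{(r)})$ acts on layers of unbounded size.

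For self-similarity, it suffices to check the first-layer sections of the generators and invoke multiplicativity of the section map. The elements of $A_r$ are rooted, so all their sections are trivial; and by inspection of the definition of $b_r$, its first-layer sections are $b_r$ itself, the $e_i$'s, and identities, all lying in $K_r$. Iterating the section identities then shows that every section of every element of $K_r$ lies in $K_r$.

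For fractality, the key step is to show that $\st_{K_r}(1_{A_r})|_{1_{A_r}} \supseteq K_r$; the reverse inclusion is immediate from self-similarity. Clearly $b_r \in \St(1)$ with $b_r|_{1_{A_r}} = b_r$. For each $i \in [0,r)$, the conjugate $b_r^{\overline{e_i}}$ also lies in $\St(1)$, and using that $\overline{e_i}$ is rooted and that $b_r$ fixes the first layer, one computes
\[
b_r^{\overline{e_i}}|_{1_{A_r}} = b_r|_{\overline{e_i}} = e_i.
\]
Thus $\{b_r\} \cup E_r \subseteq \st_{K_r}(1_{A_r})|_{1_{A_r}}$, which suffices. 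To pass from $1_{A_r}$ to an arbitrary $x \in A_r$, conjugate by the rooted element $x \in K_r$: this carries $\st_{K_r}(1_{A_r})$ onto $\st_{K_r}(x)$, and rootedness of $x$ ensures that the section at $x$ is unchanged by this conjugation.

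Spherical transitivity then follows by induction on the depth $n$. The base case $n=1$ holds since $A_r \leq K_r$ acts regularly on $\mathcal{L}_{T^{(r)}}(1) = A_r$ by right multiplication. For the inductive step, I would write two vertices of $\mathcal{L}_{T^{(r)}}(n+1)$ as $u_1 u'$ and $v_1 v'$ with $u_1, v_1 \in A_r$, first apply a rooted element of $A_r$ to align the first coordinates, and then use fractality to invoke the inductive hypothesis inside the subtree $T^{(r)}_{v_1}$. The only genuinely nontrivial point in the whole argument is the section computation $b_r^{\overline{e_i}}|_{1_{A_r}} = e_i$; this is precisely where the translation map $a \mapsto \overline{a}$ earns its keep, ensuring that conjugation moves the section $b_r|_{\overline{e_i}} = e_i$ back into the position at $1_{A_r}$.
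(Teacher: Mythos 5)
Your proof is correct and follows essentially the same route as the paper: verify self-similarity from the sections of the generators, use conjugation by rooted elements of $A_r$ to produce all of $\mathbb E_r$ as first-layer sections of elements of $\St_{K_r}(1)$ (which gives fractality), and then combine transitivity of $A_r$ on the first layer with fractality to get spherical transitivity by induction. The only difference is expository: you first establish $\st_{K_r}(1_{A_r})|_{1_{A_r}} = K_r$ and then transfer to general $x$ by conjugation, whereas the paper handles all $x$ at once; the section computation $b_r^{\overline{e_i}}|_{1_{A_r}} = b_r|_{\overline{e_i}} = e_i$ you single out is indeed the one nontrivial identity underlying the argument, and it is correct.
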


\begin{proof}
	The rooted group $A_r$ acts transitively on the first layer. Since rooted elements have trivial sections, self-similarity follows from the fact that all sections of $b_r$ are in $\mathbb E_r \subset K_r$. In fact, all elements of $\mathbb E_r$ appear as sections of $b_r \in \St_{K_r}(1)$. Conjugating by rooted elements, we may achieve any section if $b_r$ at any first layer vertex, thus $K_r$ is fractal. By the transitivity of $A_r$ the group $K_r$ acts transitively on the second layer, and inductively, $K_r$ is spherically transitive.
\end{proof}

Now we come to the core of our argument for establishing slow period growth. We prove an inequality between the length of an element and its sections at vertices of the second layer, using that the automorphism $b_r$ has short sections with respect to $\mathbb E_r$, but the only conjugates in $b_r^A$ aside from $b_r$ which have non-trivial section at the vertex $1_{A_r}$ are big with respect to $\mathbb E_r$. In preparation for the proof of \cref{Theorem}{thm:main}, we prove this inequality for a more general class of groups than just those of the form $K_r$. Therefore we need the following technical definition. Let $r \in \N_+$, and let $\tilde T$ be a rooted tree of type $(X_n)_{n \in \N_+}$ such that $X_1 = X_2 = A_r$. An element $b \in \St_{\Aut(\tilde T)}(1)$ is said to \emph{two-layer resemble} $b_r$ if the following three conditions hold:
\begin{enumerate}
	\item $b|_x = b_r|_x$ for $x \in A_r\setminus\{1_{A_r}\}$,
	\item $b|_{1_{A_r}} \in \St(1)$,
	\item $b|_{1_{A_r}x} = b_r|_{1_{A_r}x}$ for $x \in A_r\setminus\{1_{A_r}\}$.
\end{enumerate}
A group $\mathcal G \leq \Aut(\tilde T)$ is said to \emph{two-layer resemble $K_r$ with respect to $b$} if it is generated by a set $\mathcal E = E_r \cup \langle b \rangle$, where $b$ is an automorphism that two-layer resembles $b_r$.

Clearly $b_r$ two-layer resembles itself. Notice that the coset $b_r \St(2)$ contains many elements that do not two-layer resemble $b_r$, since the first (and second) layer sections of an element in $\St(2)$ do not need to be rooted. In fact, if the trees $\tilde T$ and $T^{(r)}$ coincide, the set of elements that two-layer resembles $b_r$ is equal to the coset $b_r \cdot \rst_{\Aut(T)}(1_{A_r}1_{A_r})$.

\begin{lemma}
	\label{lem:main technical}
	Let $\mathcal G \leq \Aut(\tilde T)$ be a group that two-layer resembles $K_r$ with respect to $b \in \Aut(\tilde T)$. Write $\mathcal S = A_r \cup \langle b \rangle^{A_r}$ and $\mathcal S'' = A_r \cup \langle b|_{1_{A_r}1_{A_r}} \rangle^{A_r}$. Then for all $g \in \mathcal G$ and $u \in \mathcal{L}_{\tilde T}(2)$ we have
	\[
		||g|_u||_{\mathcal S''} \leq \left\lceil {\,||g||_{\mathcal S}}/{r} \right\rceil.
	\]
\end{lemma}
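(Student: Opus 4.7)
The plan is to fix a geodesic $\mathcal{S}$-expression $g = s_1 \cdots s_n$ and, for a chosen $u = x_0 y_0 \in \mathcal{L}_{\tilde T}(2)$, compute $g|_u$ via the iterated section formula: setting $u_1 = u$ and $u_{i+1} = u_i . s_i$ with $u_i = x_i y_i$, one has $g|_u = \prod_{i=1}^n s_i|_{u_i}$. First I would classify the factors. Rooted generators $s_i \in A_r$ have $s_i|_{u_i} = 1$ and merely shift $x_i$, contributing trivially. For $s_i = a_i b^{k_i} a_i \in \langle b \rangle^{A_r}$, condition~(2) forces $s_i \in \St(1)$, so $x_i$ is preserved; by condition~(1), the first-layer section at $x_i$ equals $(b|_{x_i + a_i})^{k_i}$, which is $(b|_{1_{A_r}})^{k_i}$ if $x_i + a_i = 1_{A_r}$, $e_j^{k_i}$ if $x_i + a_i = \overline{e_j}$, and trivial otherwise. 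Invoking condition~(3) to take a further section at $y_i$, the second-layer section $s_i|_{u_i}$ comes out as $(b|_{1_{A_r} 1_{A_r}})^{k_i}$ when $x_i + a_i = 1_{A_r}$ and $y_i = 1_{A_r}$, as $e_j^{k_i}$ when $x_i + a_i = 1_{A_r}$ and $y_i = \overline{e_j}$, and as $1$ in the remaining cases (noting that indices with $x_i + a_i = \overline{e_j}$ shift $y_i$ by $e_j^{k_i}$ without themselves contributing to $g|_u$). In every non-trivial case, the contribution is a single element of $\mathcal{S}''$ of length one.

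Next, I would consolidate the product. Only indices with $s_i \in \langle b \rangle^{A_r}$ and $x_i + a_i = \overline{e_j}$ (the ``Case B'' indices) shift $y_i$; between two such shifts, the $y$-coordinate is constant, and all non-trivial contributions in that stretch lie in the same cyclic subgroup of $\mathcal{S}''$ (either $\langle b|_{1_{A_r} 1_{A_r}} \rangle$, if $y_i = 1_{A_r}$, or $\langle e_j \rangle \subseteq A_r$, if $y_i = \overline{e_j}$) and collapse to a single $\mathcal{S}''$-generator. Consequently, $||g|_u||_{\mathcal{S}''}$ is bounded by the number $M$ of \emph{productive blocks} in the expansion, that is, the maximal runs of indices with a common $y_i \in \{1_{A_r}\} \cup \overline{E_r}$ at which $x_i + a_i = 1_{A_r}$.

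The crux, and the main obstacle, is to prove $M \leq \lceil n/r \rceil$. I expect this to follow from a local counting argument showing that each productive block is separated from the next by at least $r$ generators in the geodesic word. Three ingredients should combine to force this gap: (a) the geodesicity of the $\mathcal{S}$-expression, which forbids adjacent $\langle b \rangle^{A_r}$-generators with matching conjugators, as they would collapse to a single $\mathcal{S}$-generator; (b) the requirement $x_i = a_i$ at productive indices, which imposes specific net $A_r$-shifts between consecutive productive blocks and forces intervening $A_r$-generators; and (c) the requirement $y_i \in \{1_{A_r}\} \cup \overline{E_r}$, which constrains how Case-B generators can rearrange the $y$-coordinate to re-enter a productive region. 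The rank-$r$ structure of $A_r \cong \mathbb{F}_2^r$, together with the size $|\overline{E_r}| = r$ of the set of admissible non-zero $y$-values, is what ultimately yields the factor $r$ in the gap estimate, and thus in the bound.
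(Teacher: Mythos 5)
Your setup is sound and your high-level picture of where the difficulty lies is accurate, but the proposal stops precisely at the point where the proof actually has to happen. You reduce everything to the claim ``$M \leq \lceil n/r\rceil$, because productive blocks are separated by at least $r$ generators,'' and then say you ``expect this to follow'' from three ingredients (a)--(c), without combining them into an argument. That is a genuine gap: the lemma \emph{is} that counting claim, and nothing in (a)--(c) as stated actually forces the gap of $r$. Geodesicity (a) only rules out literal cancellations of adjacent conjugates; the constraint $x_i + a_i = 1_{A_r}$ (b) controls the first-layer coordinate but does not by itself force many intervening letters; and in (c) you invoke the cardinality $|\overline{E_r}| = r$, which is the wrong quantity --- what drives the bound is not how many admissible $y$-values there are but how \emph{long} they are, namely that $\|\overline{e_i}\|_{E_r} = r-1$ because the translation $a \mapsto \overline a = a\prod_j e_j$ sends $e_i$ to the product of the other $r-1$ generators.

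The paper's proof organises the computation differently and that reorganisation is what makes the key inequality fall out cleanly. It first reduces to showing $\|g|_u\|_{\mathcal S''} \leq 1$ whenever $\|g\|_{\mathcal S} \leq r$, then splits a geodesic word into $\lceil n/r\rceil$ blocks of length $\leq r$. Within one block it passes to the first-layer section $g|_x$, rewrites it as a word of $\mathcal E' = E_r \cup \langle \underline b\rangle$-length at most $r$ in the form $(\ast)$, and observes that only conjugates $\underline b^{\,y}$ and $\underline b^{\,y\overline{e_i}}$ can contribute to $g|_{xy}$. If both types occur, the $E_r$-letters in $(\ast)$ accumulating up to the first such occurrence have total length at least $\|y\|_{E_r}$, those up to the second at least $\|y\overline{e_i}\|_{E_r}$, and together with the two $\underline b$-symbols this forces $\|g|_x\|_{\mathcal E'} \geq \|y\|_{E_r} + \|y\overline{e_i}\|_{E_r} + 2 \geq (r-1) + 2 = r+1$, contradicting $\|g|_x\|_{\mathcal E'} \leq r$. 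So within any length-$r$ block exactly one type can appear, and the section lands in a single cyclic factor of $\mathcal S''$. If you want to salvage your block-separation formulation, you would need to produce exactly this kind of $E_r$-length inequality between two consecutive productive indices in the original $\mathcal S$-geodesic, and you would also have to be careful that your gap estimate is phrased in the $\mathcal S$-metric on $g$ while the natural inequality arises in the $\mathcal E'$-metric on $g|_x$; the paper's two-step (chunk, then take first-layer section and re-express) sidesteps this mismatch.
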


\begin{proof}
	The reader less interested in the technicalities may consider this proof in its application to the example $b = b_r$, reading $\mathcal G = K_r$, $\mathcal E = \mathcal E' = \mathbb E_r$ and $\mathcal S = \mathcal S'' = \mathbb S_r$, avoiding some of the cumbersome notation necessary to deal with the more delicate construction proving \cref{Theorem}{thm:main}.
	
	It is sufficient to prove $||g|_u||_{\mathcal S''} \leq 1$ for all $g \in B_{\mathcal G}^{\mathcal S}(r)$. From this one derives the desired inequality by splitting a minimal $\mathcal S$-word representing $g$ into pieces of length at most $r$.
	
	We may write
	\[
		g = (b^{n_1})^{a_1} \dots (b^{n_{k-1}})^{a_{k-1}} a_k
	\]
	for some $a_i \in A_r$ and $n_i \in \Z$ with $i \in [1, k]$. For any $x \in A_r$ we have
	\[
		g|_x = (b^{n_1})^{a_1}|_x \dots (b^{n_{k-1}})^{a_{k-1}}|_x,
	\]
	hence the section at $x$ equals product of $k$ elements of the form $(b^{n})^{a}|_x = b^n|_{xa} = (b|_{xa})^n$ with $n \in \Z, a \in A_r$. If $x \neq a$, this section is equal to $(b_r|_{xa})^n$, by the first property of automorphisms two-layer resembling $b_r$. Otherwise we obtain $(b|_{1_{A_r}})^n$. Thus writing $\mathcal E' = E_r \cup \langle b|_{1_{A_r}} \rangle$ we see that $||g|_x||_{\mathcal E'} \leq k \leq r$.
	
	Now we look at $g|_{xy}$ for $y \in A_r$. Write $\underline{b}$ for $b|_{1_{A_r}}$. There are elements $a_{i, j} \in E_r$ and $n_i \in \Z$ with $i \in [1, k]$ and $j \in [1, m_i]$ for some $m_i \leq r$ such that
	\[
		g|_{x} = a_{1,1} \dots a_{1, m_1} \underline{b}^{n_1} a_{2,1} \dots a_{2, m_2} \underline{b}^{n_2} \dots a_{k-1, 1} \dots a_{k-1, m_{k_1}}\underline{b}^{n_{k-1}} a_{k,1} \dots a_{k, m_k}
	\]
	and $k-1 + \sum_{i = 1}^k m_i \leq r$, since every element on the right side is obtained as a section of an expression $(b|_{xa})^n$.
	
	Writing $\widehat{a_i} = a_{i,1} \dots a_{i, m_i}$ for $i \in [1, k]$ we obtain
	\begin{equation*}\label{eq:syllables h}
		g|_{x} = (\underline{b}^{n_1})^{\widehat{a_1}} (\underline{b}^{n_2})^{\widehat{a_1} \widehat{a_2}} \dots (\underline{b}^{n_{k-1}})^{\widehat{a_1} \dots \widehat{a_{k-1}}} \widehat{a_1}\dots\widehat{a_k}.\tag{$\ast$}
	\end{equation*}
	Using the second and the third property of automorphisms two-layer resembling $b_r$ we find, for all $n \in \Z$ and $a \in A_r\setminus\{y\}$,
	\[
		(\underline{b}^n)^{a}|_{y} = \underline{b}^n|_{ya} = (\underline{b}|_{1_{A_r}(ya)})^n = (b_r|_{ya})^n,
	\]
	hence the only generators of form $(\underline{b}^n)^a$ with non-trivial section at $1_{A_r}$ are powers of either $\underline{b}^y$ or $\underline{b}^{y\overline{e_i}}$ for some $i \in [0, r)$. Thus, calculating $g|_{xy}$, we might ignore all others. Assume that (\ref{eq:syllables h}) contains no generator of type $\underline{b}^y$. Then $g|_{xy}$ is a product of elements in $A_r$, hence of $\mathcal S''$-length $1$. Similarly, if it does not contain a generator $\underline{b}^{y\overline{e_i}}$, it is a power of $b|_{1_{A_r}1_{A_r}}$ and also of length $1$. Consequently, we have to exclude the case that both types appear in (\ref{eq:syllables h}). Assume for contradiction that this is the case. Without loss of generality we may suppose that $\underline{b}^{y}$ appears first. Then there exist $\ell_0, \ell_1 \in [1,k]$ such that
	\[
		\widehat{a_1} \dots \widehat{a_{\ell_0}} = y \quad\text{and}\quad \widehat{a_1} \dots \widehat{a_{\ell_1}} = y\overline{e_i}.
	\]
	Thus, in (\ref{eq:syllables h}), left of the $\ell_0$\textsuperscript{th} $\underline{b}$-symbol (which is associated to the generator $\underline{b}^y$) there appear at least $||y||_{\mathcal E'}$ letters from $\mathcal E'$. Between the $\ell_0$\textsuperscript{th} and the $\ell_1$\textsuperscript{th} $\underline{b}$-symbols appear at least $||y\overline{e_i}||_{\mathcal E'} \geq r - 1 - ||y||_{\mathcal E'}$ letters. Thus, also counting the at least two $\underline{b}$-symbols, we obtain
	\[
		r \geq ||g|_x||_{\mathcal E'} \geq ||y||_{E_r} + ||y\overline{e_i}||_{E_r} + 2 \geq r+1,
	\]
	a contradiction.
\end{proof}

Applying the lemma to $b = b_r$ and $G = K_r$ (and using the self-similarity of $K_r$) we obtain the following inequality.

\begin{lemma}\label{lem:reduction}
	Let $g \in K_r$ be an element and let $u \in \mathcal{L}_{T^{(r)}}(2)$. Then
	\[
		||g|_{u}||_{\mathbb S_r} \leq \left\lceil \,{||g||_{\mathbb S_r}}/{r} \right\rceil.
	\]
\end{lemma}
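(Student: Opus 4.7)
The plan is to deduce this inequality as a direct specialisation of the more general \cref{Lemma}{lem:main technical}, applied with $\mathcal G = K_r$ and $b = b_r$. The only thing to check is that, in this particular case, both the ambient generating set $\mathcal S$ and the target generating set $\mathcal S''$ appearing in the general lemma collapse to $\mathbb S_r$.

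First I would verify that $b_r$ two-layer resembles itself: conditions (i) and (iii) become tautologies for $b = b_r$, and (ii) is exactly the property that $b_r|_{1_{A_r}} = b_r \in \St(1)$, which is built into the definition of $b_r$. Hence $K_r$ two-layer resembles $K_r$ with respect to $b_r$, so \cref{Lemma}{lem:main technical} applies.

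Next I would identify the two generating sets. Since $b_r$ is an involution, $\langle b_r \rangle = \{1_{A_r}, b_r\}$, so $\langle b_r \rangle^{A_r} = \{1_{A_r}\} \cup b_r^{A_r}$, which gives $\mathcal S = A_r \cup \langle b_r \rangle^{A_r} = A_r \cup b_r^{A_r} = \mathbb S_r$. For $\mathcal S''$ the relevant element is $b_r|_{1_{A_r}1_{A_r}}$; iterating the section identities yields $b_r|_{1_{A_r}1_{A_r}} = (b_r|_{1_{A_r}})|_{1_{A_r}} = b_r|_{1_{A_r}} = b_r$, so the same computation gives $\mathcal S'' = \mathbb S_r$.

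Finally, self-similarity of $K_r$ (\cref{Lemma}{lem:infinite}) ensures that $g|_u \in K_r$ for every second-layer vertex $u$, so $||g|_u||_{\mathbb S_r}$ is well-defined. Substituting the two identifications $\mathcal S = \mathcal S'' = \mathbb S_r$ into the conclusion of \cref{Lemma}{lem:main technical} produces the stated inequality. There is no real obstacle here; the technical content was already absorbed into the general lemma, and all that remains is the routine unpacking of the definitions for the specific element $b_r$.
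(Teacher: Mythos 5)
Your proof is correct and takes the same route as the paper, which simply records that the inequality follows from applying \cref{Lemma}{lem:main technical} to $b = b_r$, $\mathcal G = K_r$ together with self-similarity of $K_r$. You merely spell out the routine identifications $\mathcal S = \mathcal S'' = \mathbb S_r$ and the verification that $b_r$ two-layer resembles itself, all of which check out.
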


\begin{proof}[Proof of \cref{Theorem}{thm:secondary}]
	Notice that \cref{Proposition}{prop:periodicity}, \cref{Lemma}{lem:infinite} and \cref{Lemma}{lem:reduction} show that $K_r$ is an infinite $2$-group in case $r > 4$. 

	We prove $\pi_{K_r}^{\mathbb S_r}(n) \leq n^{1/(\log_{4}(r)- 1)}$ for every $n \in \N$ and $r > 4$. Clearly, choosing some big integer $r$, this proves the theorem.

	Let $g \in K_r$ be an element. Write $n = ||g||_{\mathbb{S}_r}$. Since $A_r$ is a group of exponent two, $g^2 \in \St_{K_r}(1)$ and $g^4 \in \St_{K_r}(2)$. 
	Consequently, the order of $g^4$ is the least common multiple of the orders of $g^4|_{u}$ for $u \in \mathcal L_{T^{(r)}}(2)$, which equals, since $K_r$ is a $2$-group, the maximum of their orders, i.e.\
	\[
		\ord(g) \leq 4 \cdot \max \{ \ord(g^4|_{u}) \mid u \in \mathcal{L}_{T^{(r)}}(2) \}.
	\]
	In view of \cref{Lemma}{lem:reduction}, we see $||g^4|_{u}||_{\mathbb S_r} \leq \lceil \frac {4 n} {r}\rceil$, so for $n \geq r$
	\[
		\pi_{K_r}^{\mathbb S_r}(n) \leq 4 \cdot \pi_{K_r}^{\mathbb S_r}\left(\lceil {4n}/{r}\rceil\right),
	\]
	hence, using that $K_r$ is generated by involutions,
	\[
		\pi_{K_r}^{\mathbb S_r}((\tfrac{r}{4})^k) \leq 4^k \pi_{K_r}^{\mathbb S_r}(1) = 2 \cdot 4^k.
	\]
	This implies
	\[
		\pi_{K_r}^{\mathbb S_r} \precsim \exp_4 \circ \log_{\frac {r}4} \sim n^{1/(\log_{4}(r)-1)}.\qedhere
	\]
\end{proof}


\section{Growing valency and the proof of \cref{Theorem}{thm:main}} 
\label{sec:proof_of_cref_theorem_thm_main}

We now construct a group $G$ with the properties described in \cref{Theorem}{thm:main}. To achive this we take the generators $b_r$ of the groups $K_r$ constructed in the previous chapter and build a single automorphism $d$ acting on a rooted tree with unbounded valency, that resembles some $b_{r_0}$ for two layers (where the valency is $2^{r_0}+1$), then uses one layer to increase the valency to $2^{r_1}+1$ for some $r_1 > r_0$, then resembles $b_{r_1}$ for two layers~\&c. This will allow us to use the reduction formulas for the $b_r$, but with (rapidly) increasing $r$.

The slowest period growth (using this construction) will be achived if one arranges the sequence $(r_n)_{n \in \N}$ to grow as fast as possible. For this there is a natural upper bound. We want the sections of $d$ at a given layer of valency $r_{n+1}+1$ to generate an elementary abelian $2$-group acting on the layer below, but can use no more than $2^{r_{n}}-1$ sections as generators. Hence the maximum possible increase of valency is given by the following function $f: \N \to \N$. Let $f(0) = 3$ and $f(k+1) = 2^{f(k)}-1$ for $k \in \N$. Since we aim to increase the valency of our tree on every third layer, we also introduce $f_3(k) = f(\lfloor k/3\rfloor)$, a function that takes every value of $f$ thrice. These functions grow very quickly.

\begin{lemma}\label{lem:growth of f}
	For all $k \in \N$ we have
	\(
		f(k) \geq \tetr_{2}(k).
	\)
\end{lemma}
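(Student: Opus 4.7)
The approach is a straightforward induction on $k$, but with a slightly strengthened inductive hypothesis, since the naive hypothesis $f(k) \geq \tetr_2(k)$ only yields $f(k+1) = 2^{f(k)}-1 \geq \tetr_2(k+1) - 1$, which is one short of what is needed. The $-1$ in the recursion $f(k+1) = 2^{f(k)} - 1$ has to be absorbed, and this can be done by strengthening the hypothesis to $f(k) \geq \tetr_2(k) + 1$.

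More precisely, the plan is to prove by induction on $k$ the auxiliary claim that $f(k) \geq \tetr_2(k) + 1$ for every $k \in \N$, from which the statement of the lemma follows immediately. For the base case one notes that $f(0) = 3 \geq 2 = \tetr_2(0) + 1$. For the inductive step, assuming $f(k) \geq \tetr_2(k) + 1$, one computes
\[
    f(k+1) = 2^{f(k)} - 1 \geq 2^{\tetr_2(k) + 1} - 1 = 2\cdot \tetr_2(k+1) - 1 \geq \tetr_2(k+1) + 1,
\]
where the last inequality holds because $\tetr_2(k+1) \geq 2$ for every $k \in \N$.

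There is no real obstacle here; the only subtlety is the observation that the naive hypothesis is not quite strong enough to carry the induction through, so one has to pass to the slightly stronger estimate $f(k) \geq \tetr_2(k) + 1$.
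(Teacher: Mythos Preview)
Your proof is correct and essentially identical to the paper's: both strengthen the inductive hypothesis to $f(k) \geq \tetr_2(k) + 1$ (the paper phrases it as $f(k) - 1 \geq \tetr_2(k)$) and carry out the same straightforward induction, differing only in trivial arithmetic rearrangements in the inductive step.
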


\begin{proof}
	We use induction on $k$ for the statement $f(k) - 1 \geq \tetr_{2}(k)$. Clearly $f(0) - 1 = 2 \geq 1 = \tetr_{2}(0)$. Now for all $k > 0$
	\[
		f(k + 1) - 1 = 2^{f(k)}-2 \geq 2^{f(k)-1} \geq 2^{\tetr_{2}(k)} \geq \tetr_{2}(k+1).\qedhere
	\]
\end{proof}

Recall from the previous chapter that $A_r$ denotes a copy of the elementary abelian $2$-group with an (ordered) basis $E_r = \{e_0, \dots, e_{r-1}\}$. We now fix some enumeration (which may depend on $r$) $\{a_i \mid i \in [0, 2^r)\} = A_r$ for these groups, such that $a_0$ is the trivial element. Also recall also the translation map $a \mapsto \overline{a}^{(r)} = a\prod_{i = 0}^{r-1}e_i$ defined in the previous chapter. We introduce the superscript to make precise within which group we are translating.

Now we define $T$ as the rooted tree of type $(A_{f_3(k)})_{k \in \N}$. For any $k \equiv_3 0$ excluding $k = 0$, the $k$\textsuperscript{th}, $(k+1)$\textsuperscript{st} and $(k+2)$\textsuperscript{nd} layers of $T$ have valency $2^{f_3(k)}+1$. Write $T_k$ for the (isomorphism class) of any subtree of $T_u$ for some $u \in \mathcal{L}_T(k)$, i.e. $T_0 = T$ and $T_k$ of type $(A_{f_3(l)})_{l \geq k}$.

Again we view the group $A_{f_3(k)}$ as rooted automorphisms by their right multiplication action. Define a sequence of automorphisms $d_n \in \Aut(T_k)$ for $k \in \N$ by
\begin{align*}
	d_k &= (1_{A_{f_3(k)}}: d_{k+1};\, \overline{e_i}^{(f_3(k))}: e_i;\, \ast: \id) &\text{ for } k \equiv_3 0, 1 \text{ and}\\
	d_k &= (1_{A_{f_3(k)}}: d_{k+1};\, a_i: e_i \text{ for } i \in [1, 2^{f_3(k)}) ) &\text{ for } k \equiv_3 2.
\end{align*}
Finally, we define $G_k = \langle A_{f_3(k)} \cup \{d_k\} \rangle \leq \Aut(T_k)$, and write $G$ for $G_0$.

Note that among the sections of $d_k$ are all the elements of $E_{k+1}$. Using this, we see that, for every $v \in T$ of length $k$, we have $G|_v = G_k$, and $G$ acts spherically transitively on $T$.

For $k \in \N$, define $\mathrm S_k = A_{f_3(k)} \cup \{d_k\}^{A_{f_3(k)}}$ and $\mathrm E_k = E_{f_3(k)} \cup \{d_k\}$, filling the rôles of $\mathbb S_r$ and $\mathbb E_r$ of \cref{Section}{sec:proof_of_the_theorem}. Both are generating sets for $G_k$. Note that $d_k^2 = 1$, hence both sets consist of involutions.

\begin{lemma}\label{lem:reduction step}
	Let $k \in \N$ be a positive integer such that $k \equiv_3 0$ and $g \in G_k$ an element. Then for all $v \in \mathcal{L}_{T_k}(2)$ we have
	\[
		||g|_v||_{\mathrm S_{k+2}} \leq \left\lceil \frac{||g||_{\mathrm S_n}}{ f(k/3)}\right\rceil.
	\]
\end{lemma}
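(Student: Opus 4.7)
The plan is to reduce this statement to the already-proven \cref{Lemma}{lem:main technical}. To do so I have to verify that the automorphism $d_k$, viewed on the tree $T_k$, two-layer resembles $b_{f_3(k)}$, so that the group $G_k$ falls into the framework of the technical lemma and the inequality follows immediately.

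First I would check the hypotheses on the tree. Since $k \equiv_3 0$, we have $f_3(k) = f_3(k+1) = f(k/3) =: r$, so the first and second layers of $T_k$ are both of type $A_r$, as required. The element $d_k$ lies in $\St(1)$ by construction. For the three resemblance conditions: condition (1) follows from the definition $d_k|_x = e_i$ if $x = \overline{e_i}^{(r)}$ and $d_k|_x = \id$ otherwise on $A_r \setminus\{1\}$, matching $b_r$; condition (2) holds because $d_k|_{1_{A_r}} = d_{k+1}$ and $k+1 \equiv_3 1$, so $d_{k+1} \in \St(1)$ as well; condition (3) reduces to $d_{k+1}|_x = b_r|_x$ for $x \neq 1_{A_r}$, which again holds by the definition of $d_{k+1}$ together with the equality $f_3(k+1) = f_3(k) = r$ (so that the translate $\overline{e_i}^{(f_3(k+1))}$ coincides with $\overline{e_i}^{(r)}$).

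Next, since $G_k = \langle A_r \cup \{d_k\}\rangle = \langle E_r \cup \langle d_k\rangle \rangle$, the group $G_k$ two-layer resembles $K_r$ with respect to $d_k$ in the sense of the definition preceding \cref{Lemma}{lem:main technical}. Applying that lemma directly yields, for every $g \in G_k$ and $v \in \mathcal L_{T_k}(2)$,
\[
    ||g|_v||_{\mathcal S''} \leq \left\lceil ||g||_{\mathcal S}/r \right\rceil,
\]
where $\mathcal S = A_r \cup \langle d_k\rangle^{A_r}$ and $\mathcal S'' = A_r \cup \langle d_k|_{1_{A_r} 1_{A_r}}\rangle^{A_r}$.

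Finally I would identify these generating sets with the ones appearing in the statement. Since $d_k$ is an involution, $\langle d_k\rangle^{A_r}$ contributes only $\{d_k\}^{A_r}$ beyond $A_r$, so $\mathcal S = \mathrm S_k$. For $\mathcal S''$, compute $d_k|_{1_{A_r} 1_{A_r}} = (d_k|_{1_{A_r}})|_{1_{A_r}} = d_{k+1}|_{1_{A_r}} = d_{k+2}$; combined with $f_3(k+2) = f_3(k) = r$ this gives $\mathcal S'' = A_{f_3(k+2)} \cup \{d_{k+2}\}^{A_{f_3(k+2)}} = \mathrm S_{k+2}$. The stated bound follows. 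No genuine obstacle appears — the work is entirely in matching definitions and checking that the three coincidences $f_3(k) = f_3(k+1) = f_3(k+2)$ really do allow \cref{Lemma}{lem:main technical} to be applied verbatim.
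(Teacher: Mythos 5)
Your proposal is correct and matches the paper's proof, which is exactly this reduction: apply Lemma~\ref{lem:main technical} after observing that $d_k$ two-layer resembles $b_{f(k/3)}$ and that $\mathcal S = \mathrm S_k$, $\mathcal S'' = \mathrm S_{k+2}$. You simply spell out the resemblance checks and the identification $d_k|_{1_{A_r}1_{A_r}} = d_{k+2}$ that the paper leaves implicit.
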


\begin{proof}
	We apply \cref{Lemma}{lem:main technical}. This is possible since by definition $d_k$ two-layer resembles $b_{f(k/3)}$. Notice that $\mathcal S = \mathrm S_k$ and $\mathcal S'' = \mathrm S_{k+2}$.
\end{proof}

\begin{lemma}\label{lem:change of valency}
	Let $k \in \N$ and let $g \in G_k$. Then for all $x \in \mathcal{L}_{T_k}(1)$
	\[
		||g^2|_x||_{\mathrm S_{k+1}} \leq ||g||_{\mathrm S_{k}} + 1.
	\]
\end{lemma}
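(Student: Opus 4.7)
The plan is to reduce $g$ to a convenient normal form and then compute $g^2|_x$ explicitly. Since $A_{f_3(k)}$ is abelian of exponent~$2$ and every conjugate $d_k^c$ stabilises the first layer of $T_k$, a direct verification gives the commutation identity
\[
    a \cdot d_k^c = d_k^{ac} \cdot a \qquad\text{for all } a, c \in A_{f_3(k)}.
\]
Sweeping all rooted letters to the right in a minimal $\mathrm S_k$-expression for $g$ yields a normal form
\[
    g = d_k^{b_1} d_k^{b_2} \cdots d_k^{b_p} \cdot u, \qquad b_i, u \in A_{f_3(k)},
\]
with $||g||_{\mathrm S_k}$ equal to $p$ if $u = 1_{A_{f_3(k)}}$ and to $p + 1$ otherwise. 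Because $(d_k^c)^2 = 1$ would let two adjacent equal $b_i$'s cancel, the minimality forces $b_i \neq b_{i+1}$ for all $i$.

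Using $u^2 = 1$ together with $u \cdot d_k^c = d_k^{uc} u$ to push the middle $u$ through the second block, one finds
\[
    g^2 = d_k^{b_1} \cdots d_k^{b_p} \cdot d_k^{ub_1} \cdots d_k^{ub_p}.
\]
All $2p$ factors fix the first layer, so taking the section at $x \in A_{f_3(k)}$ via $(d_k^c)|_x = d_k|_{xc}$ gives
\[
    g^2|_x = \prod_{i=1}^p d_k|_{xb_i} \cdot \prod_{i=1}^p d_k|_{xub_i}.
\]
By the definition of $d_k$, each factor $d_k|_y$ is either the involution $d_{k+1}$ (exactly when $y = 1_{A_{f_3(k)}}$) or an element of $A_{f_3(k+1)}$. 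Applying the analogous identity $a \cdot d_{k+1} = d_{k+1}^a \cdot a$ in $G_{k+1}$ iteratively pushes all $A_{f_3(k+1)}$-factors to the right, re-expressing $g^2|_x$ as a product of $\ell$ conjugates of $d_{k+1}$ followed by a single rooted element, where $\ell$ is the number of $d_{k+1}$-factors; thus $||g^2|_x||_{\mathrm S_{k+1}} \leq \ell + 1$ in general.

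The argument now splits. If $u \neq 1_{A_{f_3(k)}}$, then the conditions $b_i = x$ and $b_i = xu$ are mutually exclusive, so $\ell \leq p$ and $||g^2|_x||_{\mathrm S_{k+1}} \leq p + 1 = ||g||_{\mathrm S_k}$. If $u = 1_{A_{f_3(k)}}$, then $g^2|_x = (g|_x)^2$ and two ingredients must be combined: the number $\ell_0 = |\{i : b_i = x\}|$ of $d_{k+1}$-factors in $g|_x$ is constrained by $b_i \neq b_{i+1}$ to satisfy $\ell_0 \leq \lceil p/2 \rceil$, and the rooted part $R \in A_{f_3(k+1)}$ of $g|_x$ satisfies $R^2 = 1$, so in $(g|_x)^2$ the two trailing copies of $R$ annihilate after rearrangement. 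Hence $g^2|_x$ reduces to a product of $2 \ell_0 \leq p + 1 = ||g||_{\mathrm S_k} + 1$ conjugates of $d_{k+1}$. The main obstacle is precisely this second case: the crude bound $\ell \leq 2p$ would give $2||g||_{\mathrm S_k} + 1$ generators, so both the antichain estimate $\ell_0 \leq \lceil p/2 \rceil$ (forced by the minimality of the normal form) and the cancellation $R^2 = 1$ (which eliminates the expected rooted tail) are essential to recover the claimed bound.
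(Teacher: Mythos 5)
Your proof is correct, and it follows the same overall strategy as the paper's: bring $g$ to the normal form $d_k^{b_1}\cdots d_k^{b_p}u$, observe that minimality forces $b_i\neq b_{i+1}$, take sections, and count occurrences of $d_{k+1}$. Where you diverge is in the treatment of the rooted remainder. The paper expands $g^2|_x = g|_x\, g|_{x.g}$, bounds the number of $d_{k+1}$-symbols in each factor by $\lceil\|g\|/2\rceil$ and simply asserts $\|g|_x\|_{\mathrm S_{k+1}}\leq\lceil\|g\|_{\mathrm S_k}/2\rceil$, silently absorbing the possible trailing $A_{f_3(k+1)}$-letter that appears once the rooted sections are swept to the right. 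Your version makes this step honest by splitting on whether $u$ is trivial: if $u\neq 1$ you replace the ``every other index'' count by the sharper per-index observation that $b_i=x$ and $b_i=xu$ are mutually exclusive, giving at most $p$ occurrences of $d_{k+1}$ in the full product of $2p$ sections and hence length at most $p+1=\|g\|_{\mathrm S_k}$; if $u=1$ you use $g^2|_x=(g|_x)^2$ so that the two copies of the rooted tail $R$ cancel, leaving $2\ell_0\leq 2\lceil p/2\rceil\leq p+1$ conjugates of $d_{k+1}$ and no rooted letter. Both cases land exactly on the stated bound. This is a refinement rather than a different route, and it is in fact tighter bookkeeping than the paper's own (the paper's displayed chain, read literally, would give an extra additive constant from the uncounted rooted tails; this is harmless for the asymptotics but your case analysis recovers the $+1$ precisely).
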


\begin{proof}
	Since $\langle d_k\rangle^{A_{f_3(k)}}$ is closed under conjugation with $A_{f_3(k)}$, we may write
	\(
		g = d_k^{a_1} \dots d_k^{a_{l}}
	\)
	for $l = ||g||_{\mathbb S_n}$, for some $a_i \in A_{f_3(k)}$ for $i \in [1, l]$. Then
	\(
		g|_x = d_k^{a_1}|_x \dots d_k^{a_{l}}|_x.
	\)
	Now at most every second expression $d_k^{a_i}|_x$ can evaluate to $d_k$. Otherwise there is some $i$ such that $a_i = a_{i+1} = x$, which implies
	\[
		g = d_k^{a_1} \dots d_k^{a_{i-1}} d_k^u d_k^u d_k^{a_{i+2}} \dots d_k^{a_{l}} = d_k^{a_1} \dots d_k^{a_{i-1}} d_k^{a_{i+2}} \dots d_k^{a_{l}}.
	\]
	But then $||g||_{\mathrm S_{n}} \leq l-2$, a contradiction. Hence there are at most $\lceil l/2 \rceil$ symbols $d_k$ in the product $d_k^{a_1}|_x \dots d_k^{a_{l}}|_x$, and we have $||g|_x||_{\mathrm S_k} \leq \lceil \frac 1 2 ||g||_{\mathrm S_{k}} \rceil$. Now
	\[
		||g^2|_x||_{\mathrm S_{k+1}} = ||g|_x g|_{x.g}||_{\mathrm S_{k+1}} \leq ||g|_x||_{\mathrm S_{k+1}} ||g|_{x.g}||_{\mathrm S_{k+1}} \leq 2 \lceil \tfrac 12||g||_{\mathrm S_k} \rceil \leq ||g||_{\mathrm S_{k}} + 1.\qedhere
	\]
\end{proof}

\begin{lemma}\label{lem:reduction full}
	Let $k \equiv_3 0$ and let $g \in G_k$. Then for all $u \in \mathcal{L}_{T_k}(3)$
	\[
		||g^8|_u||_{\mathrm S_{k+3}} \leq \left\lceil \frac{4 \cdot ||g||_{\mathrm S_{k}}}{ f(k/3)} \right\rceil + 1.
	\] 
\end{lemma}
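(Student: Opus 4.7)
The plan is to split the three-layer reduction into the two layers handled by \cref{Lemma}{lem:reduction step} and the single valency-changing layer handled by \cref{Lemma}{lem:change of valency}. Concretely, I would decompose $u = vx$ with $v \in \mathcal{L}_{T_k}(2)$ and $x \in \mathcal{L}_{T_{k+2}}(1)$, and introduce the intermediate element $h = g^4|_v$.

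Before applying the two lemmas, I would verify the layer-stabilisation that makes the decomposition sensible. Since each rooted subgroup $A_{f_3(j)}$ is elementary abelian of exponent $2$ and $d_j \in \St(1)$, the quotient of $G_j$ by $\St_{G_j}(1)$ is an elementary abelian $2$-group. Iterating this observation yields $g^2 \in \St_{G_k}(1)$, $g^4 \in \St_{G_k}(2)$, and $g^8 \in \St_{G_k}(3)$. In particular $g^4$ fixes $v$, so $h$ is an honest element of $G_{k+2}$ (invoking the self-similarity identity $G_k|_v = G_{k+2}$ noted after the definition of $G$), and $g^8|_v = h^2$, whence $g^8|_u = h^2|_x$.

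With the bookkeeping in place, the first step is to apply \cref{Lemma}{lem:reduction step} to $g^4$ and $v$, using monotonicity of the ceiling together with $||g^4||_{\mathrm S_k} \leq 4 \cdot ||g||_{\mathrm S_k}$:
\[
	||h||_{\mathrm S_{k+2}} = ||g^4|_v||_{\mathrm S_{k+2}} \leq \left\lceil \frac{||g^4||_{\mathrm S_k}}{f(k/3)} \right\rceil \leq \left\lceil \frac{4 \cdot ||g||_{\mathrm S_k}}{f(k/3)} \right\rceil.
\]
The second step is to apply \cref{Lemma}{lem:change of valency} to $h \in G_{k+2}$ and $x$, giving
\[
	||g^8|_u||_{\mathrm S_{k+3}} = ||h^2|_x||_{\mathrm S_{k+3}} \leq ||h||_{\mathrm S_{k+2}} + 1,
\]
and chaining the two displays delivers the stated bound. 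No step is genuinely difficult; the only place one might slip is in conflating $g^8|_v$ with $(g|_v)^8$, which is precisely why it is worth isolating the intermediate element $h = g^4|_v$ and appealing to the fact that $g^4$ stabilises the second layer.
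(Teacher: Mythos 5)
Your proof is correct and follows essentially the same route as the paper: decompose $u$ at the second-layer vertex, set $h = g^4|_v$ (using $g^4 \in \St_{G_k}(2)$), and chain \cref{Lemma}{lem:reduction step} with \cref{Lemma}{lem:change of valency} together with the trivial bound $||g^4||_{\mathrm S_k} \leq 4\,||g||_{\mathrm S_k}$. The only cosmetic difference is the order in which the two lemmas are invoked in the write-up; the argument is identical.
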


\begin{proof}\belowdisplayskip=-12pt
	Since $A_{f_3(k)}$ and $A_{f_3(k+1)}$ are of exponent two, we have $g^4 \in \St_{G_k}(2)$. Hence $g^8|_u = (g^4|_{u_1u_2})^2|_{u_3}$, where $u = u_1u_2u_3$. Now
	\begin{align*}
		||g^8|_u||_{\mathrm S_{k+3}} &= ||(g^4|_{u_1u_2})^2|_{u_3}||_{\mathrm S_{k+3}}&&\\
		&\leq ||g^4|_{u_1u_2}||_{\mathrm S_{k+2}} + 1&& \text{(by \cref{Lemma}{lem:change of valency})}\\
		&\leq \left\lceil\frac{||g^4||_{\mathrm S_{k}}}{f(k/3)} \right\rceil + 1 && \text{(by \cref{Lemma}{lem:reduction step})}\\
		&\leq \left\lceil\frac{4 \cdot ||g||_{\mathrm S_{k}}}{f(k/3)} \right\rceil + 1.
	\end{align*}\qedhere
\end{proof}

\begin{lemma}\label{lem:G periodic}
	The group $G$ is a $2$-group.
\end{lemma}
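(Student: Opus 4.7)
The plan is to run by hand the induction implicit in \cref{Proposition}{prop:periodicity}, since the uniform form of condition $(\star)$ does not quite hold at every vertex because of short elements; instead I iterate \cref{Lemma}{lem:reduction full} directly along layers divisible by $3$. First I record the standard fact that $g^{8^j} \in \St_G(3j)$ for every $g \in G$ and $j \in \N$: each quotient $\St_G(k)/\St_G(k+1)$ embeds into a direct product of copies of $A_{f_3(k)}$, which has exponent $2$, so $\St_G(k)^2 \leq \St_G(k+1)$; iterating yields the claim, and in particular each section $g^{8^j}|_v$ at layer $3j$ lies in $G_{3j}$.

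Fix $g \in G$ and set $\ell_j = \max\{||g^{8^j}|_v||_{\mathrm S_{3j}} : v \in \mathcal L_T(3j)\}$. Since $g^{8^j}$ stabilises layer $3j$, sections factor as $g^{8^{j+1}}|_{vu} = (g^{8^j}|_v)^8|_u$, so \cref{Lemma}{lem:reduction full} applied inside $G_{3j}$ gives
\[
	\ell_{j+1} \leq \left\lceil \frac{4 \ell_j}{f(j)} \right\rceil + 1.
\]
Because $f(j) \to \infty$ by \cref{Lemma}{lem:growth of f}, iterating this recursion forces $\ell_j \leq 2$ after finitely many steps.

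To conclude, once $\ell_j \leq 2$ and $f(j) \geq 9$, I bypass \cref{Lemma}{lem:reduction full} and instead invoke the sharper \cref{Lemma}{lem:reduction step} applied to $(g^{8^j}|_v)^4 \in G_{3j}$: since $||(g^{8^j}|_v)^4||_{\mathrm S_{3j}} \leq 4\ell_j \leq 8$, for all $u_1 u_2 \in \mathcal L_{T_{3j}}(2)$ one obtains $||(g^{8^j}|_v)^4|_{u_1 u_2}||_{\mathrm S_{3j+2}} \leq \lceil 8/f(j) \rceil \leq 1$. Hence $(g^{8^j}|_v)^4|_{u_1 u_2}$ is either trivial or an involution, and squaring it produces the identity, so $g^{8^{j+1}}|_{v u_1 u_2 u_3} = \id$ for every descendant. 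This forces $g^{8^{j+1}} = \id$, hence $\ord(g) \mid 2^{3(j+1)}$, a power of $2$.

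The main obstacle is the $+1$ in the bound of \cref{Lemma}{lem:reduction full}, which alone would prevent $\ell_j$ from ever dropping below $2$ no matter how fast $f(j)$ grows. It is circumvented precisely in the last step: the sharper \cref{Lemma}{lem:reduction step} has no $+1$, and in a $2$-group the final descent from layer $3j+2$ to layer $3j+3$ is effected for free by squaring an involution rather than by invoking a new length reduction.
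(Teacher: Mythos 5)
Your proof is correct, and it takes a genuinely different route from the paper's. The paper proves this lemma by a direct appeal to \cref{Proposition}{prop:periodicity} with parameter $n = 10$: it bounds the exponent $\exp(G|_u/\St_{G|_u}(10)) \leq 2^{10}$ and then argues that traversing ten layers forces at least three applications of \cref{Lemma}{lem:reduction step}, giving a length contraction by a factor of at least $f(0)f(1)f(2) = 2667 > 2^{10}$, so that condition~$(\star)$ holds. You instead run the induction by hand along layers divisible by $3$, iterating \cref{Lemma}{lem:reduction full} to drive the section lengths $\ell_j$ down to $2$, and then closing the loop with a separate, explicit argument.

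What each approach buys: the paper's route is shorter and leverages the machinery it has already set up, but the displayed contraction bound $||g|_w||_{\mathrm S_{k+10}} \leq ||g||_{\mathrm S_k}/(f(0)f(1)f(2))$ silently absorbs the ceilings and $+1$ terms from \cref{Lemma}{lem:reduction step} and \cref{Lemma}{lem:change of valency}; these are harmless because \cref{Proposition}{prop:periodicity} handles short elements in its base case, but the paper does not say so. Your version is more self-contained and makes the treatment of short elements completely explicit, which is necessary precisely because the $+1$ in \cref{Lemma}{lem:reduction full} makes $\ell = 2$ a genuine fixed point of the recursion $\ell_{j+1} \leq \lceil 4\ell_j/f(j)\rceil + 1$. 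Your closing step---applying \cref{Lemma}{lem:reduction step} to the length-$\leq 8$ element $(g^{8^j}|_v)^4$, getting a section of length $\leq 1$, and then using that all elements of $\mathrm S_{k}$ are involutions so that squaring kills it---is exactly the right way to break that fixed point without appealing to the proposition. One tiny simplification you could have made: once $(g^{8^j}|_v)^8|_{u_1u_2} = \id$ for all $u_1u_2 \in \mathcal L_{T_{3j}}(2)$ and $(g^{8^j}|_v)^8$ stabilises that layer, you already get $g^{8^{j+1}}|_v = \id$ directly, without needing to descend to $u_3$.
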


\begin{proof}
	This follows from \cref{Proposition}{prop:periodicity} and \cref{Lemma}{lem:reduction step}. Using the notation of \cref{Proposition}{prop:periodicity}, let $n = 10$. Since $G|_u/\St_{G|_u}(1)$ is an elementary abelian $2$-group for all $u \in T_0$, we see that $\exp(G|_u/\St_{G|_u}(n)) \leq 2^n$. Now, irregardless of the value of $k$ modulo $3$, taking the $10$\textsuperscript{th} section of some $g \in G_k$ allows us to invoke \cref{Lemma}{lem:reduction step} at least three times. Hence for all $w \in \mathcal{L}_{T_k}(10)$
	\[
		||g|_w||_{\mathrm S_{k+10}} \leq \frac{||g||_{\mathrm S_k}}{f(0)f(1)f(2)} = \frac{||g||_{\mathrm S_k}}{3\cdot 7 \cdot 127} < \frac{||g||_{\mathrm S_k}}{2^{10}}
	\]
	and we conclude that $G$ is a $2$-group.
\end{proof}

\begin{proof}[Proof of \cref{Theorem}{thm:main}]
	Let $n, k \in \N$ with $k \equiv_3 0$, and let $g \in B_{G_k}^{\mathrm S_k}(n)$. Since $\exp(A_l) = 2$ for all $l \in \N$, the $2^3$-power of $g$ fixes the third layer of $T_n$, hence
	\[
		\ord(g^8) \leq 8 \cdot \max\{\ord(g^8|_v) \mid v \in \mathcal{L}_{T_k}(3)\}.
	\]
	Now \cref{Lemma}{lem:reduction full} implies
	\[
		\pi_{G_k}^{\mathrm S_k}(n) \leq 8 \cdot \pi_{G_{k+3}}^{\mathrm S_{k+3}}\left(\left\lceil \tfrac{4 \cdot n}{ f(k/3)}\right\rceil + 1\right).
	\]
	Writing $v_k(n) = \lceil 4 \cdot n/f(k/3) \rceil + 1$ and
	\[
		u(n) = \min \{ l \in \N \mid v_l(v_{l-1}(\dots (v_0(n)) \dots)) = 2 \}
	\]
	we find
	\[
		\pi_{G}^{\mathrm S}(u(n)) \leq 8^{n} \cdot \pi_{G_{3n}}^{\mathrm S_{3n}}(2).
	\]
	Now, using the same argument as before, we see that $\pi_{G_{3n}}^{\mathrm S_{3n}}(2) \leq 4$ by \cref{Lemma}{lem:reduction step}. Thus, deriving $\tetr_2 \precsim u(n)$ from \cref{Lemma}{lem:growth of f}, we obtain
	\[
		\pi_{G}\precsim \exp_8 \circ \slog_{2}.\qedhere
	\]
\end{proof}


\section{Lawlessness growth} 
\label{sec:lawlessness}

Let $G$ be a lawless group generated by a finite set $S$. By the definition of lawlessness, the image of the word map $w(G^m)$ is non-trivial for every reduced word $w \in F_m\setminus\{1\}$ in $m$ letters, $m \in \N$. We may define the \emph{complexity of $w$ in $G$ with respect to $S$} by
\[
	\chi_G^S(w) = \min \left\{ \; \sum_{i = 1}^m ||g_i||_S \mid \underline g = (g_i)_{i = 1}^m \in G^m, w(\underline g) \neq 1 \right\} \in \N.
\]
Now the \emph{lawlessness growth function} $\mathcal{A}_G^S: \N \to \N$ of $G$ with respect to $S$ is defined by 
\[
	\mathcal{A}_G^S(n) = \max \{ \chi_G^S(w) \mid w \in F_m\setminus\{1\} \text{ with } ||w|| \leq n \}.
\]
This definition is due to Bradford, first given in \cite{Bra}, where he proves the independence of the growth type from the choice of generating set and establishes a connexion to the period growth in the case of periodic $p$-groups.

\begin{prop}{\cite{Bra}}\label{prop:torsion to lawless}
	Let $G$ be a finitely generated lawless periodic $p$-group for some prime $p$ and $f: \N \to \N$ some function. Then
	\(
		\pi_G^S(n) \leq f(n)
		\text{ implies }
		\mathcal A_G^S(f(n)) \geq n.
	\)
\end{prop}

Using this, we give examples of groups with large lawlessness growth (cf.\ \cite[Question 10.2]{Bra}) by proving that the groups constructed in the previous sections are in fact lawless. As a consequence of \cref{Theorem}{thm:main} and \cref{Proposition}{prop:torsion to lawless} we obtain the following corollary.

\begin{cor}
	There is a finitely generated lawless group $G$ such that
	\[
		\mathcal A_G^S \gtrsim \tetr_{2} \circ \log_8.
	\]
\end{cor}

It remains to prove that the group $G$ of \cref{Theorem}{thm:main} is lawless. We prove that it is weakly branch, which is sufficient by \cite{Abe05}. Our proof is technical, but also establishes that the groups $K_r$ are weakly branch for all integers $r > 5$. To avoid some obstacles appearing for small valencies, we look at $G_{6}$ instead of $G = G_0$, for which the proof of \cref{Theorem}{thm:main} works verbatim, except for the number of generators. Thus in the remainder of this section, we write $G$ for $G_6$ and define the function $f$ prescribing the valencies of the tree upon which $G$ acts by $f(0) = 127$ and $f(n+1) = 2^{f(n)}-1$ for $n > 0$.

\begin{lemma}\label{lem:resemblence reprod}
	Let $r \in \N_{> 5}$ and let $\mathcal G \leq \Aut(\widetilde T)$ be a group that two-layer resembles $K_r$ with respect to $b$. Define
	\begin{align*}
		N &= \langle [b, e_i, e_j] \mid i, j \in [0, r), i \neq j \rangle^{\mathcal G} \leq \Aut(\widetilde T), \quad \text{and}\\
		\underline N &= \langle [b|_{1_{A_r}}, e_i, e_j] \mid i, j \in [0, r), i \neq j \rangle^{\mathcal G|_{1_{A_r}}} \leq \Aut(\widetilde T|_{1_{A_r}})
	\end{align*}
	Then for every $x \in \mathcal L_{\widetilde T}(1)$ we have
	\(
		\rst_{N}(x) \geq \underline{N}.
	\)
\end{lemma}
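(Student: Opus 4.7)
The plan is to reduce the claim to an explicit construction. Since $A_r \leq \mathcal G$ acts transitively on $\mathcal{L}_{\widetilde T}(1)$ and $N \triangleleft \mathcal G$, it suffices to handle $x = 1_{A_r}$. With $c = b|_{1_{A_r}}$, the group $\underline N$ is the normal closure in $\mathcal G|_{1_{A_r}}$ of the elements $[c, e_i, e_j]$. Any element $(1_{A_r}\colon n) \in \rst(1_{A_r})$ conjugated by $g \in \St_{\mathcal G}(1_{A_r})$ produces $(1_{A_r}\colon n^{g|_{1_{A_r}}})$, and since the section map $\St_\mathcal G(1_{A_r}) \to \mathcal G|_{1_{A_r}}$ is surjective by definition of the section group, it is enough to show that $\gamma_{ij} := (1_{A_r}\colon [c, e_i, e_j])$ lies in $N$ for each $i \neq j$; closure under these conjugations and under products then gives $(1_{A_r}\colon \underline N) \subseteq \rst_N(1_{A_r})$.

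The first step is to compute the first-layer sections of $g := [b, e_i, e_j]$. Using the two-layer resemblance of $b$ to $b_r$ and the hypothesis $r > 5$, the support splits on $A_r = \mathcal L(1)$ into two disjoint parts: a \emph{low-weight} subgroup $\{1_{A_r}, e_i, e_j, e_i + e_j\}$, on which $g|_a$ takes values $c, c^{-1}, c^{-1}, c$; and a \emph{high-weight} set consisting of vertices of Hamming weight in $\{r-3, r-2, r-1, r\}$, on which the sections are products of the involutions $e_k \in A_r$. Two consequences follow: the square $g^2$ lies in $\St_{\Aut(\widetilde T)}(2)$, with first-layer sections $c^{\pm 2}$ at the four low-weight vertices and trivial elsewhere; and a direct calculation from the definition of $b$ gives $b^2 = (1_{A_r}\colon c^2) \in \mathcal G$, already a single-vertex-supported element at layer $1$ (although $b^2 \notin N$ in general).

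The combinatorial heart of the proof is the assembly of $\gamma_{ij}$ from $g_{ij} = [b, e_i, e_j]$ together with the ``orthogonal'' commutators $g_{kl} = [b, e_k, e_l]$ for pairs $\{k, l\}$ disjoint from $\{i, j\}$, which exist because $r > 5$. The low-weight supports of distinct such commutators intersect only at $1_{A_r}$, allowing the character pattern of $g_{ij}^2$ to be broken step by step through multiplication by carefully chosen $A_r$-translates of $g_{kl}^2$ (and of $g_{ij}^2$ itself), eventually producing an element of $N \cap \rst(1_{A_r})$ whose section at $1_{A_r}$ is a suitable power of $c$. Commutation of this element by members of $\St_\mathcal G(1_{A_r})$ whose $1_{A_r}$-sections realise the rooted $e_i, e_j$ action on $\widetilde T|_{1_{A_r}}$ then produces the iterated commutator $[c, e_i, e_j]$, yielding $\gamma_{ij}$. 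The main obstacle I anticipate is the rigidity of the character pattern $c, c^{-1}, c^{-1}, c$: any product of $A_r$-conjugates of $g_{ij}$ alone preserves or simultaneously kills all four low-weight sections, so exploiting several disjoint pairs and carefully tracking the interaction between low- and high-weight sections is essential. The hypothesis $r > 5$ is used both to guarantee the disjointness of the two regions and to secure a sufficient supply of disjoint pairs for the combinatorial construction.
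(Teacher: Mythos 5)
Your reduction to the vertex $x = 1_{A_r}$ and your computation of the first-layer sections of $g_{ij} = [b, e_i, e_j]$ — values $c = b|_{1_{A_r}}$ on the coset $\langle e_i, e_j\rangle$, elements of $A_r$ on a high-weight set, trivial elsewhere — match the paper. But the route you then take diverges from the paper's and, as you yourself half-suspect, runs into an obstacle that I do not think your sketch overcomes.

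The central problem is that passing to $g_{ij}^2$ throws away exactly the information the argument needs. Squaring kills all high-weight sections (they lie in the elementary abelian group $A_r$), leaving an element supported on the full coset $\langle e_i, e_j\rangle$ with section $c^2$ at each of its four points. Any product of $A_r$-conjugates of such squares has, at every first-layer vertex, a section which is an \emph{even} power of $c$. Two consequences follow. First, the ``combinatorial heart'' you describe is asked to collapse a function that is constant on cosets of two-dimensional $\mathbb F_2$-subspaces of $A_r$ to a delta function at $1_{A_r}$, and you give no mechanism for this; conjugating $g_{ij}^2$ by $a \in A_r$ merely translates the supporting coset to $a\langle e_i,e_j\rangle$, which either equals $\langle e_i,e_j\rangle$ or is disjoint from it, so naive cancellation always removes or adds a whole coset, never isolates a point. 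Second, even granting that step, the best you could produce is an element of $\rst_N(1_{A_r})$ whose $1_{A_r}$-section is $c^{2m}$ for some $m$, and commutation with lifts of $e_i, e_j$ then gives $[c^{2m}, e_i, e_j]$, not $[c, e_i, e_j]$; this element and its normal closure need not contain $\underline N$. Your final sentence asserting that the iterated commutator $[c, e_i, e_j]$ is obtained is therefore unjustified, and it is in fact unreachable along this path.

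The paper's proof keeps the high-weight data and uses it as the cancellation device, via commutators rather than products of squares. With $c_{i,j} = [b, e_i, e_j]$ and five pairwise distinct indices $i,j,k,m,n$ (here $r > 4$ enters), it forms $[c_{i,j}, c_{m,n}^{\overline{e_k}}]$. All first-layer sections of $c_{i,j}$ and $c_{m,n}^{\overline{e_k}}$ commute except at the two vertices $1_{A_r}$ and $\overline{e_k}$: there the section of one element is $c$ and of the other is the rooted automorphism $e_k$, and the commutator has sections $[c,e_k]$ and $[e_k,c]$ respectively; $r>5$ guarantees the eight low-weight vertices involved are distinct so that every other section is trivial. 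One further commutator with $c_{i,j}^{\overline{e_l}}$ (whose section at $1_{A_r}$ is $e_l$ and at $\overline{e_k}$ is trivial) kills $\overline{e_k}$ and leaves a single-vertex-supported element whose $1_{A_r}$-section is $[c, e_k, e_l]$, exactly a normal generator of $\underline N$. So the disjointness of index-pairs that you invoke is indeed used, but to separate \emph{low-weight from high-weight} supports of two different commutators, not to cancel low-weight cosets against each other.

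A smaller point: you assert that the section map $\St_{\mathcal G}(1_{A_r}) \to \mathcal G|_{1_{A_r}}$ is surjective ``by definition of the section group.'' That is not automatic; it is a fractality-type statement and must be argued. The paper does so by observing that $b$ and $b^{\overline{e_i}}$ all lie in $\St_{\mathcal G}(1)$ and their sections at $1_{A_r}$, namely $c$ and $e_i$, generate $\mathcal G|_{1_{A_r}}$; that justification should be included.
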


\begin{proof}
	Write $c_{i,j} = [b, e_i, e_j]$ for the (normal) generators of $N$. Clearly $N \leq \St_{\mathcal G}(1)$. We compute
	\begin{align*}
		c_{i,j}|_x &=
		\begin{cases}
			b|_{1_{A_r}} &\text{ if }x \in \{1_{A_r}, e_i, e_j, e_ie_j\},\\
			e_t &\text{ if }x \in \{\overline{e_t}, \overline{e_t e_i}, \overline{e_t e_j}, \overline{e_t e_i e_j}\} \text{ and } t \in [0,r)\setminus\{i,j\},\\
			e_ie_j &\text{ if }x \in \{\overline{1_{A_r}}, \overline{e_i e_j}, \overline{e_i}, \overline{e_j}\},\\
			\id &\text{ otherwise}.
		\end{cases}
	\end{align*}
	Let $i, j, k, m, n$ be pairwise distinct elements of $[0, r)$ (here we need $r > 4$). We look at $[c_{i,j}, c_{m,n}^{\overline{e_k}}]$. Since both $c_{i,j}$ and $c_{m,n}^{\overline{e_k}}$ are in $\St(1)$, taking the commutator commutes with taking sections. All sections except $b|_{1_{A_r}}$ commute, so we have $[c_{i,j}, c_{m,n}^{\overline{e_k}}]|_x = \id$ for all $x \not\in \{1_{A_r}, e_i, e_j, e_ie_j, \overline{e_k}, \overline{e_ke_m}, \overline{e_ke_n}, \overline{e_ke_me_n}\}$.
	Since $r > 5$, all these vertices are distinct. Furthermore, for the remaining cases we calculate
	\begin{align*}
		[c_{i,j}, c_{m,n}^{\overline{e_k}}]|_x =
		\begin{cases}
			[b|_{1_{A_r}},e_k] &\text{ if }x = 1_{A_r},\\
			[e_k, b|_{1_{A_r}}] &\text{ if }x = \overline{e_k},\\
			[b|_{1_{A_r}}, \id] = \id &\text{ if }x \in \{e_i,e_j,e_ie_j\},\\
			[\id, b|_{1_{A_r}}] = \id &\text{ if }x \in \{\overline{e_ke_m},\overline{e_ke_n},\overline{e_ke_me_n}\}.
		\end{cases}
	\end{align*}
	Now let $l \in [0,r)\setminus\{i, j, k\}$. Then $c_{i,j}^{\overline{e_l}}|_{1_{A_r}} = e_l$ and $c_{i,j}^{\overline{e_l}}|_{\overline{e_k}} = c_{i,j}|_{e_ke_l} = \id$.
	Consequently
	\[
		[c_{i,j}, c_{m,n}^{\overline{e_k}},c_{i,j}^{\overline{e_l}}]|_x = \begin{cases}
			[b|_{1_{A_r}}, e_k, e_l] &\text{ if }x = 1_{A_r},\\
			\id &\text{ else},
		\end{cases}
	\]
	thus $\rst_N(1_{A_r}) \geq \langle [b|_{1_{A_r}}, e_i, e_j] \mid i, j \in [0, r), i \neq j \rangle$. Since $\{b^{\overline{e_i}}|_{1_{A_r}} \mid i \in [0, r) \} \cup \{ b|_{1_{A_r}} \}$ generates $\mathcal G|_{1_{A_r}}$, for every $g \in \mathcal G|_{1_{A_r}}$ we find an element $\widehat{g} \in \St_{\mathcal G}(1)$ such that $\widehat g|_{1_{A_r}} = g$. Conjugating with these elements, we find $\rst_N(1_{A_r}) \geq \underline{N}$. Since $\mathcal G$ acts transitively on the first layer, all rigid vertex stabilisers are conjugate, and we obtain the result.
\end{proof}

\begin{prop}
	Let $r \in \N_{> 5}$. Then $K_r$ is weakly regular branch, hence lawless.
\end{prop}

\begin{proof}
	This follows directly from \cref{Lemma}{lem:resemblence reprod}, since the two normal subgroups $N, \underline{N}$ are equal in the case of $K_r$.
\end{proof}

\begin{lemma}\label{lem:semi fractal}
	Let $k \in \N$ and $x \in \mathcal L_{T_k}(1)$. Then $\St_{G_k}(1)|_x \geq {G_{k+1}}$.
\end{lemma}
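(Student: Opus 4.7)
The plan is to realize each generator of $G_{k+1} = \langle A_{f_3(k+1)} \cup \{d_{k+1}\}\rangle$ as a section $g|_x$ for some $g \in \St_{G_k}(1)$. Since $d_k \in \St_{G_k}(1)$ and conjugates of $d_k$ by the rooted group $A_{f_3(k)}$ remain in the first-layer stabiliser, it suffices to show that the sections at $x$ of these conjugates exhaust a generating set of $G_{k+1}$.

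First, I compute the section of a conjugate. For any $a \in A_{f_3(k)}$, since $a$ is rooted and acts on the first layer by right multiplication, the standard section formulas together with $d_k \in \St(1)$ give $d_k^{a}|_x = d_k|_{xa}$. Consequently, for every $v \in \mathcal L_{T_k}(1)$, taking $a = xv$ (using that $A_{f_3(k)}$ is elementary abelian of exponent two) yields $d_k^{xv}|_x = d_k|_v$. Thus $\St_{G_k}(1)|_x$ contains every first-layer section of $d_k$.

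Next, I read off the required sections from the definition of $d_k$. In every residue class, $d_k|_{1_{A_{f_3(k)}}} = d_{k+1}$. For $k \equiv_3 0, 1$, the definition gives $d_k|_{\overline{e_i}^{(f_3(k))}} = e_i$ for $i \in [0, f_3(k))$; since $f_3(k) = f_3(k+1)$ in these residue classes, these are precisely the basis $E_{f_3(k+1)}$ of $A_{f_3(k+1)}$. For $k \equiv_3 2$, the definition provides $d_k|_{a_i} = e_i$ for $i \in [1, 2^{f_3(k)})$, and the recursion $f(m+1) = 2^{f(m)}-1$ gives $f_3(k+1) = 2^{f_3(k)}-1$, so these indices range over exactly the basis $E_{f_3(k+1)}$.

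Combining the above, $\St_{G_k}(1)|_x$ contains $d_{k+1}$ together with a basis of $A_{f_3(k+1)}$, hence contains the full generating set $A_{f_3(k+1)} \cup \{d_{k+1}\}$ of $G_{k+1}$, giving the desired inclusion. There is no substantive obstacle; the only care required is matching indices across the three residue classes modulo $3$, since only for $k \equiv_3 2$ does the valency actually increase, and that is precisely where the choice of $f$ is calibrated so that the sections of $d_k$ exactly fill out a basis of $A_{f_3(k+1)}$.
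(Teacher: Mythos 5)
Your proposal is correct and matches the paper's (very terse) argument: the sections of $d_k$ over the first layer realise the whole generating set $\mathrm E_{k+1}$, and conjugating $d_k$ by rooted elements of $A_{f_3(k)}$—which act transitively on $\mathcal L_{T_k}(1)$ while keeping you inside $\St_{G_k}(1)$—lets you pick up each of these sections at the fixed vertex $x$. Your version is more explicit about the section-of-conjugate computation and the index bookkeeping across residue classes, but it is the same proof.
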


\begin{proof}
	Observe $\mathrm E_{k+1} = \{ d_k|_x \mid x \in \mathcal L_{T_k}(1) \}$ and that $G_k$ acts transitively on $L_{T_k}(1)$.
\end{proof}

\begin{prop}
	The group $G$ is a weakly branch group, hence a lawless group.
\end{prop}

\begin{proof}
	Let $k \in \N$ be an integer such that $k \equiv_3 0$. We adopt the following notation to better distinguish between the generators of $A_{f_3(k)}$ and $A_{f_3(k+3)}$. If $a = e_{i_0} \dots e_{i_t}$ is a non-trival element of $A_{f_3(k)}$, we write $\underline{e}_{i_0\dots i_t}$ for the generator $d_{k+2}|_a$ of $A_{f_3(k+3)}$. Each element of $E_{f_3(k+3)}$ appears in this way. Define
	\begin{align*}
		N_k &= \langle [d_k, e_i, e_j] \mid i, j \in [0, f_3(k)), i \neq j \rangle^{G_k},\quad \text{ and}\\
		  M_k &= \left\langle [[d_k, a_1], [d_k, a_2]^g] \;\middle|\;
		  \begin{aligned}
		  & g \in G_k, a_1 = \underline{e}_j\underline{e}_{ij}\underline{e}_{l}\underline{e}_{il}, a_2 = \underline{e}_n\underline{e}_{mn}\underline{e}_{s}\underline{e}_{ms},\\
		  & i,j,l,m,n,s \in [0, f_3(k-1)) \text{ pairwise distinct }
		  \end{aligned}
		\right\rangle^{G_k}.
	\end{align*}
	The group $G_k$ two-layer resembles $P_{f_3(k)}$, thus \cref{Lemma}{lem:resemblence reprod} implies $\rst_{N_{k+1}}(u) \geq N_{k+2}$ for $u \in \mathcal L_{T_{k+1}}(1)$. We show that
	\begin{align*}
		\rst_{M_{k}}(w) &\geq N_{k+1} \text{ for $k > 0$, and }\tag{$\dagger$}\label{eq:first step}\\
		\rst_{N_k+2}(v) &\geq M_{k+3}.\tag{$\ddagger$}\label{eq:second step}
	\end{align*}
	Using this, we see that for all $u \in \mathcal L_T(l)$
	\[
		\rst_G(u) \geq \begin{cases}
			M_l &\text{ if }l \equiv_3 0,\\
			N_l &\text{ otherwise}.
		\end{cases}
	\]
	Since $N_l$ and $M_l$ are non-trivial for all $l \in \N$, this shows that $G$ is a weakly branch group.
	
	In both cases it is enough to show that the normal generators of $N_{k+1}$, resp.\ $M_{k+3}$, are contained in the rigid vertex stabiliser of $1_{A_{f_3(k+1)}}$, resp.\ $1_{A_{f_3(k+3)}}$. Using \cref{Lemma}{lem:semi fractal}, we find the full normal subgroup within the rigid vertex stabiliser of $1_{A_{f_3(k)}}$, and since $G_k$ acts spherically transitive, all rigid vertex stabilisers of the same layer are conjugate.
	
	We first prove (\ref{eq:first step}). Let $k > 0$.
	Let $a_1, a_2 \in B_{A_{f_3(k)}}^{E_{f_3(k)}}(4)$ such that $[[d_k,a_1],[d_k,a_2]]$ is a normal generator of $M_k$. Calculate
	\[
		[d_{k}, a_1]|_x = d_{k}d_{k}^{a_1}|_x = \begin{cases}
			d_{k+1} &\text{ if }x \in \{1_{A_{f_3(k)}}, a_1\},\\
			e_t &\text{ if }x \in \{\overline{e_t}, \overline{e_t}a_1\}, \text{ for some } t \in [0, f_3(k)),\\
			\id &\text{ otherwise.}
		\end{cases}
	\]	
	We want to compute $[[d_{k}, a_1], [d_{k}, a_2]^{\overline{e}_s}]$ for arbitrary $s \in [0, f_3(k))$. The set of vertices where this element might have non-trivial sections is $\{1_{A_{f_3(k)}}, a_1, \overline{e_s}, \overline{e_s}a_2\}$.
	
	We now prove that the sections $[d_{k}, a_1]|_{\overline{e_s}a_2}$ and $[d_{k}, a_2]^{\overline{e_s}}|_{a_1}$ are trivial, i.e.\ that
	\begin{align*}
		\overline{e_s}a_2 &\notin \{ 1_{A_{f_3(k)}}, a_1, \overline{e_t}, \overline{e_t}a_1 \mid t \in [0, f_3(k)), \quad\text{and}\\
		\overline{e_s}a_1 &\notin \{ 1_{A_{f_3(k)}}, a_2, \overline{e_t}, \overline{e_t}a_2 \mid t \in [0, f_3(k)).
	\end{align*}
	Now $||\overline{e_s}a_2||_{A_{f_3(k)}} \geq f_3(k) - 5$, hence $\overline{e_s}a_2$ is neither trivial nor equal to $a_1$ of length $4$. Here we use that $f_3(k) \geq f(0) > 9$. Finally
	\(
		\overline{e_t}a_1 = \overline{e_s}a_2
	\)
	implies $a_1e_s = a_2e_t$, which contradicts the definition of $a_1$ and $a_2$. This proves the first, and by analogy the second, non-inclusion statement above.
		
	Thus we find
	\[
		[[d_{k}, a_1], [d_{k}, a_2]^{\overline{e}_s}]|_x = \begin{cases}
			[d_{k+1}, e_s] &\text{ if }x = 1_{A_{f_3(k)}},\\
			[e_s, d_{k+1}] &\text{ if }x = \overline{e_s},\\
			\id &\text{ otherwise.}
		\end{cases}
	\]
	For every $q \in [0,f_3(k))\setminus\{s\}$ we obtain $$h = [[d_{k}, a_1], [d_{k}, a_2]^{\overline{e}_s}, [d_{k}, a_1]^{\overline{e}_q}] \in \rst_{M_{k}}(1_{A_{f_3(k)}}),$$
	such that $h|_{1_{A_{f_3(k)}}} = [d_{k+1}, e_s, e_q]$. This concludes the proof of (\ref{eq:first step}).
	
	We now prove (\ref{eq:second step}). Write $c_{i,j}$ for the element $[d_{k+2}, e_i, e_j] \in N_{k+2}$, where $i, j \in [0, f_3(k+2))$ are two distinct integers. Observe that
	\begin{align*}
		c_{i,j}|_{1_{A_{f_3(k+2)}}} = d_{k+3} \underline{e}_{i} \underline{e}_{j} \underline{e}_{ij}
	\end{align*}
	and that $c_{i,j}|_u \in A_{f_3(k+3)}$ for all $u \in \mathcal L_{T_{k+2}}(1)$ except the (distinct) vertices $1_{A_{f_3(k+2)}}$, $e_i$, $e_j$ and $e_ie_j$.
	Thus for $l \in [0, f_3(k+2))\setminus \{i, j\}$ we compute
	\[
		[c_{i,j}, c_{i,l}]|_x = \begin{cases}
			[d_{k+3} \underline{e}_{i} \underline{e}_{j} \underline{e}_{ij}, d_{k+3} \underline{e}_{i} \underline{e}_{l} \underline{e}_{il}] &\text{ if }x = 1_{A_{f_3(k+2)}},\\
			\text{possibly non-trivial} &\text{ if }x \in \{1_{A_{f_3(k)}}, e_i, e_j, e_l, e_ie_j, e_ie_l\},\\
			\id  &\text{ otherwise}.\\
		\end{cases}
	\]
	By \cref{Lemma}{lem:semi fractal} there is an element $\widehat g_0 \in \St_{G_{k+2}}(1)$ such that $\widehat g_0|_{1_{A_{f_3(k+2)}}} = \underline{e}_{i}\underline{e}_{j}\underline{e}_{ij}$. Now
	\[
		[c_{i,j}, c_{i,l}]^{\widehat g_0}|_{1_{A_{f_3(k+2)}}} = [d_{k+3} \underline{e}_{i} \underline{e}_{j} \underline{e}_{ij}, d_{k+3} \underline{e}_{i} \underline{e}_{l} \underline{e}_{il}]^{\underline{e}_{i}\underline{e}_{j}\underline{e}_{ij}} = [d_{k+3}, \underline{e}_{j}\underline{e}_{l}\underline{e}_{ij}\underline{e}_{il}],
	\]
	and the set of vertices $x$ such that $[c_{i,j}, c_{i,l}]^{g_0}|_x$ is possibly non-trivial is, as for $[c_{i,j}, c_{i,l}]$, the set $\{1_{A_{f_3(k)}}, e_i, e_j, e_l, e_ie_j, e_ie_l\}$.
	
	Let $g \in G_{k+3}$. There is an element $\widehat g_1 \in \St_{G_k}(1)$ such that $\widehat g_1|_{1_{A_{f_3(k)}}} = g$. We conclude that for three pairwise distinct integers $m, n, s \in [0, f_3(k+2))\setminus \{i, j, l\}$ (which is possible since the minimum value of $f_3$ greater then $5$)
	\begin{align*}
		[[c_{i,j}, c_{i,l}], [c_{m,n}, c_{m,s}]^{\widehat{g}}]|_{1_{A_{f_3(k)}}}
		&= [[d_{k+2}, \underline{e}_{j} \underline{e}_{ij} \underline{e}_{l} \underline{e}_{il}], [d_{k+2}, \underline{e}_{n} \underline{e}_{mn} \underline{e}_{s} \underline{e}_{ms}]^g],
	\end{align*}
	while all other sections are trivial, hence $\rst_{N_k}(1_A) \geq M_{k+1}$.
\end{proof}


\section{Open questions and related concepts} 
\label{sec:open_questions}

In \cite{BS01}, the authors refer to an unpublished text of Leonov \cite{Leo99}, where he establishs a connexion between the word growth and the period growth of the Grigorchuk group. It seems plausible that there is such a connexion: slow word growth makes for few elements of a given length, hence for a smaller set of candidates that might have big order. Consequently, we pose the following refinement of the question of Bradford.
\begin{description}
	\item[Q3] Is there an infinite finitely generated residually finite periodic group of exponential word growth and sublinear period growth?
\end{description}
To answer this, it would be sufficient to prove that the groups constructed in \cref{Theorem}{thm:main} and \cref{Theorem}{thm:secondary} are of exponential growth, but we doubt that this is true. In view of the numerical relation between the word and period growth in the Grigorchuk group, we think that the groups $G$ and $G_\epsilon$ are interesting candidates for groups of slow intermediate word growth. Thus we ask:
\begin{description}
	\item[Q4] Of what growth type is the word growth of $G$ and of $G_\epsilon$?
\end{description}


\begin{bibdiv}
	\begin{biblist}
		\bib{Abe05}{article}{
		 Author = {Mikl\'os {Ab\'ert}},
		 Title = {{Group laws and free subgroups in topological groups.}},
		 FJournal = {{Bulletin of the London Mathematical Society}},
		 Journal = {{Bull. Lond. Math. Soc.}},
		 ISSN = {0024-6093},
		 Volume = {37},
		 Number = {4},
		 Pages = {525--534},
		 Year = {2005},
		 Publisher = {John Wiley \& Sons, Chichester; London Mathematical Society, London},
		 DOI = {10.1112/S002460930500425X},
		 MSC2010 = {20B07 20E10 20E18 20P05 20E07 22D05},
		 Zbl = {1095.20001}
		}
		\bib{BS01}{article}{
		   author={Bartholdi, L.},
		   author={\v{S}uni\'{k}, Z.},
		   title={On the word and period growth of some groups of tree
		   automorphisms},
		   journal={Comm. Algebra},
		   volume={29},
		   date={2001},
		   number={11},
		   pages={4923--4964},
		   issn={0092-7872},
		   doi={10.1081/AGB-100106794},
		}
		\bib{BGS03}{article}{
		   author={Bartholdi, L.},
		   author={Grigorchuk, R. I.},
		   author={\v{S}uni\'{k}, Z.},
		   title={Branch groups},
		   conference={
		      title={Handbook of algebra, Vol. 3},
		   },
		   book={
		      series={Handb. Algebr.},
		      volume={3},
		      publisher={Elsevier/North-Holland, Amsterdam},
		   },
		   date={2003},
		   pages={989--1112},
		   doi={10.1016/S1570-7954(03)80078-5},
		}
		\bib{Bra}{article}{
		    author={Bradford, H.},
			title={Quantifying lawlessness in finitely generated groups},
		    year={2021},
			status={preprint},
		    eprint={arXiv:2112.08875 [math.GR]},
		}
		\bib{Gri83}{article}{
		   author={Grigorchuk, R. I.},
		   title={On the Milnor problem of group growth},
		   language={Russian},
		   journal={Dokl. Akad. Nauk SSSR},
		   volume={271},
		   date={1983},
		   number={1},
		   pages={30--33},
		   issn={0002-3264},
		}
		\bib{GVL03}{article}{
		 Author = {Daniel {Groves} and Michael {Vaughan-Lee}},
		 Title = {{Finite groups of bounded exponent.}},
		 FJournal = {{Bulletin of the London Mathematical Society}},
		 Journal = {{Bull. Lond. Math. Soc.}},
		 ISSN = {0024-6093},
		 Volume = {35},
		 Number = {1},
		 Pages = {37--40},
		 Year = {2003},
		 Publisher = {John Wiley \& Sons, Chichester; London Mathematical Society, London},
		 DOI = {10.1112/S0024609302001479},
		 MSC2010 = {20F05 20D15 20F50 20F40 17B50 17B01 17B60 20D60},
		 Zbl = {1037.20032}
		}
		\bib{GS83}{article}{
		   author={Gupta, N.},
		   author={Sidki, S.},
		   title={On the Burnside problem for periodic groups},
		   journal={Math. Z.},
		   volume={182},
		   date={1983},
		   number={3},
		   pages={385--388},
		   issn={0025-5874},
		   doi={10.1007/BF01179757},
		}
		\bib{Leo99}{article}{
		    author={Leonov, Y. G.},
			title={On precisement of estimation of periods’ growth for Grigorchuk’s 2-groups},
			date={1999},
			status={unpublished},
		}
		\bib{Pet21}{article}{
		    author={Petschick, J. M.},
			title={Two Periodicity Conditions for Spinal Groups},
		    year={2021},
			status={preprint},
		    eprint={arXiv:2112.12428 [math.GR]},
		}
		\bib{VLZ99}{article}{
		   author={Vaughan-Lee, M.},
		   author={Zel\cprime manov, E. I.},
		   title={Bounds in the restricted Burnside problem},
		   journal={J. Austral. Math. Soc. (Series A)},
		   volume={67},
		   date={1999},
		   pages={261--271},
		}
		\bib{Zel90}{article}{
		   author={Zel\cprime manov, E. I.},
		   title={Solution of the restricted Burnside problem for groups of odd
		   exponent},
		   language={Russian},
		   journal={Izv. Akad. Nauk SSSR Ser. Mat.},
		   volume={54},
		   date={1990},
		   number={1},
		   pages={42--59, 221},
		   issn={0373-2436},
		   translation={
		      journal={Math. USSR-Izv.},
		      volume={36},
		      date={1991},
		      number={1},
		      pages={41--60},
		      issn={0025-5726},
		   },
		}
		\bib{Zel91}{article}{
		   author={Zel\cprime manov, E. I.},
		   title={Solution of the restricted Burnside problem for $2$-groups},
		   language={Russian},
		   journal={Mat. Sb.},
		   volume={182},
		   date={1991},
		   number={4},
		   pages={568--592},
		   issn={0368-8666},
		   translation={
		      journal={Math. USSR-Sb.},
		      volume={72},
		      date={1992},
		      number={2},
		      pages={543--565},
		      issn={0025-5734},
		   },
		}
	\end{biblist}
\end{bibdiv}

\end{document}